\numberwithin{equation}{section}
\DeclareMathOperator{\E}{\mathbb{E}}
\DeclareMathOperator{\R}{\mathbb{R}}
\newtheorem{theorem}{Theorem}[section]
\newtheorem{lemma}[theorem]{Lemma}
\newtheorem{proposition}[theorem]{Proposition}
\newtheorem{definition}[theorem]{Definition}
\newtheorem{remark}[theorem]{Remark}
\newenvironment{keywords}{
  \noindent\textit{Keywords: }\itshape}{\par}
\DeclarePairedDelimiter{\floor}{\lfloor}{\rfloor}
\begin{document}
 
\title{Stochastic numerical approximation for nonlinear Fokker-Planck equations with singular kernels}
\author{N. Cazacu\thanks{CMAP, INRIA, École polytechnique, Institut Polytechnique de Paris, 91120 Palaiseau, France.}}
\date{}
\maketitle
\renewcommand\thefootnote{}
\footnote{
{\href{mailto:nicoleta.cazacu@polytechnique.edu}{Email:\texttt{
    nicoleta.cazacu@polytechnique.edu}}}}
\footnote{We acknowledge the support  the SDAIM project ANR-22-CE40-0015 from the French National Research Agency.}
\vspace{-0.75cm}
\begin{abstract}
    This paper studies the convergence rate of the Euler-Maruyama scheme 
    for systems of interacting particles used to approximate solutions of nonlinear 
    Fokker-Planck equations with singular interaction kernels, such as
    the Keller-Segel model. We derive 
    explicit error estimates in the large-particle limit for two objects:  the empirical 
    measure of the interacting particle system and 
    the density distribution of a single particle. 
    Specifically, under certain assumptions on the interaction kernel and 
    initial conditions, we show that the convergence rate of both objects
     towards solutions of the corresponding
    nonlinear Fokker-Planck equation depends polynomially on \( N \)  
    (the number of particles) and  on
    \( h \) (the discretization step).  The analysis shows
    that the scheme converges despite 
    singularities in the drift term. 
    To the best of our knowledge,
    there are no existing results in the literature of such 
    kind for the singular kernels considered in this work.  
\end{abstract}
\begin{keywords}
    Nonlinear Fokker-Planck equation, Euler-Maruyama scheme, 
    singular kernels, interacting particle systems.
\end{keywords}

\section{Introduction}

In this work, we obtain convergence rates of 
an Euler-Maruyama (EM) scheme
used to approximate solutions of nonlinear Fokker-Planck
partial differential equations (PDEs) of the form 
\begin{equation}\label{PDE}
\begin{cases}
    \partial_t u(t,x) = \Delta u(t,x) - 
    \nabla \cdot (u(t,x) K \ast u(t,x)), \quad 
    t>0, x \in \mathbb{R}^d,\\
    u(0,x) = u_0(x),
\end{cases}
\end{equation}
where $K$ is a singular kernel satisfying 
certain assumptions (see 
conditions \hyperlink{assumption:AK}{\boldmath{$(A_K)$}} below). 
The singular kernels considered in this work
are of interest in various domains, such as 
biology (e.g. the chemotaxis model \cite{P15}), physics 
(e.g. Coulomb potentials for electrostatic and gravitational forces)
and specifically in
fluid dynamics (e.g. Biot-Savart kernel for $2d$ Navier-Stokes equation \cite{CM93}).
In particular, this allows for 
attractive and singular interactions, such as 
the Keller-Segel interaction kernel (see \cite{KS70}).

Now consider a measure dependent stochastic differential equation (SDE),
also called  McKean-Vlasov SDE, of the form
\begin{equation}\label{SDE}
    \begin{cases}
    dX_t = K \ast u_t(X_t)dt + \sqrt{2} dW_t, \quad t>0,\\
    \mathcal{L}(X_t) = u_t,\,\mathcal{L}(X_0) = u_0,
    \end{cases}
\end{equation}
where $W$ is a standard Brownian motion and $\mathcal{L}$ denotes the law 
of a random variable. It is well-known that the law of 
the process $(X_t)_{t \geq 0}$  defined by \eqref{SDE}
satisfies 
the PDE \eqref{PDE} in a distributional sense, and under some not so 
restrictive assumptions
on $u_0$.\\

In order to approximate numerically the solution of the PDE \eqref{PDE} via 
the McKean-Vlasov 
SDE \eqref{SDE}, one might initially 
consider using an EM scheme for the SDE
\eqref{SDE}. 
However, this approach faces a key difficulty: at each time step 
of the EM scheme,
one would need to compute the density of the solution, 
which is not directly available.
This challenge can be overcome using a particle-based method inspired 
by the propagation of
 chaos theory, see \citet{Sznitman}. The core idea 
is to approximate the density \( u \) by the empirical measure of
a system of particles. Consider $N$ diffusion processes 
$(X^{i,N})_{i \in \{1, ...,N \}}$ driven by independent Brownian motions 
and interacting through their empirical measure and the kernel $K$. 
Let \( h > 0 \) denote the discretization step, 
and define the discrete time grid 
\(\tau_s^h := \lfloor \frac{s}{h} \rfloor h\). 
A discrete-time representation of the previously 
described system of $N$
diffusion processes would lead to the following EM scheme:
\begin{equation}\label{SIP}
    X_t^{i,N,h} = X_0^{i,N,h} + \int_0^t K \ast \mu_{\tau_s^h}^{N,h}
    (X_{\tau_s^h}^{i,N,h})\,ds 
    + \sqrt{2} W_t^{i}, \quad 
    t \geq 0, \, 1 \leq i \leq N,
\end{equation}
where $\mu^{N,h}_{t} : = 
\frac{1}{N} \sum_{i=1}^N \delta_{X_t^{i,N,h}}$ is the empirical measure
 and $(W^i)_{1 \leq i \leq N}$ is a 
family of independent 
standard Brownian motions on a probability space ($\Omega, \mathcal{F}, 
(\mathcal{F}_t)_{t \geq 0}, \mathbb{P}$).

However, when formally letting \( h \) 
tend to $0$ in \eqref{SIP}, 
we obtain a particle system whose well-posedness is not 
always guaranteed when the interaction kernel \( K \) exhibits 
singularities.
To circumvent this issue,
we introduce a mollification of the interaction kernel $K$ as considered 
by \citet{oelschlager1985law}, \color{black}
as well as a control cut-off function to ensure numerical
stability of the scheme. \color{black} 
Namely, let $(V^N)_{N \in \mathbb{N}^*}$ be a sequence of mollifiers defined in 
Subsection \ref{sec2.1}.
The system \eqref{SIP} is replaced by the following
regularized system of particles: 
\begin{equation}\label{EMSDE}
    X_t^{i,N,h} = X_0^{i,N,h} + \int_0^t 
    F_A(K \ast V^N \ast \mu_{\tau_s^h}^{N,h}(X_{\tau_s^h}^{i,N,h}))\,ds 
    + \sqrt{2} W_t^{i}, \quad 
    t \geq 0, \, 1 \leq i \leq N,
\end{equation}
where
$F_A$ is a smooth function (see Subsection \ref{sec2.1})
that 
cuts off extreme values of 
the drift, which is consistent from a numerical point 
of view as it reflects the limitations of computer storage 
for large numerical values. The presence of $F_A$
is also justified from a theoretical perspective, as 
it ensures a uniformly bounded drift in $N$
 for the particle system \eqref{EMSDE}, which 
 is consistent with the boundedness of the drift term \( K \ast u_t \) 
when considering the limit \( N \to \infty \)
\color{black}(the range of the cut-off will be calibrated 
along the bound $\|K \ast
u_t\|_{L^\infty}$). \color{black}

Recently, a quantitative convergence result was established in
\citet{ORT} for the non-discretized version of \eqref{EMSDE} towards the 
McKean-Vlasov SDE \eqref{SDE}. In this work, we tackle the 
question of quantitative convergence of the
EM discretized system of particles \eqref{EMSDE}. 
The strategy is to combine the 
techniques developed by \citet{ORT} (for 
Theorem \ref{thm.1}) with techniques in
 \citet{jourdain} (for Theorem \ref{thm.2}) and establish
convergence rates for the EM scheme applied to the system of 
interacting particles under singular kernel assumptions. As
in \cite{ORT},
we use a semigroup approach to decompose the difference between 
the regularized empirical measure of the system of particles \eqref{EMSDE} 
and the density of the McKean-Vlasov \eqref{SDE}. Then combining
a fine analysis of each term of the difference including a stochastic 
convolution term, we establish, for any $m\geq1$, the following 
convergence rate for the mollified empirical measure 
\(\tilde{\mu}_t^{N,h} := V^N \ast \mu_t^{N,h}\) 
towards \(u_t\), the solution of the PDE \eqref{PDE}:

\[
\mathbb{E} \left[ \sup_{s \in [0,t]}\left\| \tilde{\mu}_s^{N,h} - u_s 
\right\|_{L^1\cap L^r}^m \right]^{\frac{1}{m}}
\leq C \mathbb{E} \left[ \left\| \tilde{\mu}_0^{N,h} - u_0 
\right\|_{L^1\cap L^r}^m \right]^{\frac{1}{m}} 
+ C N^{-v_1} + C N^{v_2} h^{v_3}.
\]
where $r$ and \(v_1, v_2, v_3 > 0\) depend on the kernel 
assumptions, as detailed in Theorem \ref{thm.1}.

Then 
we study the convergence of
the density of the $i$-th particle in the EM scheme towards the density
of the McKean-Vlasov particle,
with an approach inspired by 
\cite{jourdain}. This involves expanding
 the densities around the heat kernel. 
 Each term of the expansion is then carefully bounded to manage 
 the effects of irregularities and discretization errors. For any 
 \(t \in [0, T]\), any \(1 < p < \frac{d}{d-1} \) and almost any \(x \in \mathbb{R}^d\), 
we obtain the following rate of convergence for \(u^{i,N,h}_t\), 
the density of the \(i\)-th particle of the EM scheme, to \(u_t\), 
the density of the McKean-Vlasov SDE:

\[
|u^{i,N,h}_t(x) - u_t(x)| 
\leq C \left( \mathbb{E} \left[ \left\| \tilde{\mu}_0^{N,h} - u_0 \right\|_{L^1\cap 
L^r}^{\bar{p}} \right]^{\frac{1}{\bar{p}}} + N^{-v_1} + N^{v_2} h^{v_3} \right)
\left( \int_{\mathbb{R}^d} g_{c/p}(t, x-z) u_0(dz) \right)^{1/p}.
\]
Here \(g_c(t, \cdot)\) is the Gaussian density 
on \(\mathbb{R}^d\) centered at zero with covariance matrix 
\(ct \mathbb{I}_d\), and the other parameters are further 
specified in Section 2. 

In practice, if \textcolor{black}{the initial error} $\| \tilde{\mu}_0^{N,h} - u_0 \|_{L^1\cap L^r}$ 
\textcolor{black}{converges to $0$ fast enough}, the convergence rate of the empirical measure and of the 
density of a particle towards the solution of the PDE are of order 
\(O(N^{-v_1} + N^{v_2} h^{v_3})\). For bounded Lipschitz kernels, 
this corresponds to a convergence rate of order 
\(O(N^{-\left(\frac{1}{d+2}\right)^-} + h^{\frac{1}{2}})\), while 
for Coulomb type kernels 
$K \varpropto \nabla |x|^{2-d}$, when 
$d \geq 2$,
the rate is \(O(N^{-\left(\frac{1}{2(d+1)}\right)^-}+N^{
    \left(\frac{d}{2(d+1)}\right)^+} h^{\frac{1}{2}^-})\).
Further discussions are provided in Section 4.

The challenge in deriving both convergence rates 
lies in ensuring that the bounds explicitly depend on \(N\) 
(the number of particles which goes to $\infty$) 
and \(h\) (the time discretization step which tends to $0$). 
This dependence must be accurately quantified to reflect the 
trade-off between particle-based approximations and 
discretization-induced errors, particularly under the constraints 
imposed by the singularity of the interaction kernel.\\

\textbf{Literature:}
The use of stochastic particle methods to 
approximate solutions of PDEs can be traced back to \citet{Chorin}, 
who applied them to reaction-diffusion equations. This approach was later 
extended by Bossy and Talay in \cite{BT95} for one-dimensional 
nonlinear
PDEs with
bounded Lipschitz drifts and 
bounded $C^1_b$ uniformly positive diffusion, 
and also in \citet{BT94}
for one-dimensional 
PDEs of Burgers type 
(for the one-dimensional Burgers model, an interesting 
comparison with classical numerical analysis techniques was provided by 
\citet{BFP97}). 
Later on, the result was improved and 
generalized 
by \citet{B00} for one-dimensional viscous scalar conservation laws, and 
a strong convergence rate for the empirical 
cumulative distribution function
was obtained.
The first results on the optimal convergence rates of
 the Euler scheme for interacting particle systems were obtained by
\citet{KO97}. Building on the Malliavin calculus techniques
 they used, \citet{AK02} improved the convergence rate in \cite{BT95}.
We also refer the reader to \citet{B05} for a review of 
convergence results and rates of convergence for mean-field particle approximations, 
particularly for Lipschitz coefficients and viscous scalar conservation laws.
For a more recent survey, see \citet{CD22}, where stochastic particle methods are 
discussed.
Recently, new results have been obtained in general dimensions and 
for drift $b(t,x,\mu)$ and diffusion $\sigma(t,x,\mu)$ coefficients
of McKean-Vlasov SDEs having different regularity assumptions.
For instance, \citet{BH19} and \citet{L24} assume Lipschitz 
continuity of the coefficients in the Wasserstein distance with
respect to $\mu$ and Hölder or Lipschitz continuity in $x$ and $t$, and
\citet{FS25} consider Hölder-type regularity assumptions for both 
the drift and the diffusion coefficients.
Other scenarios involve linear or super-linear growth conditions, as in \citet{Z19} and \citet{CLL24}, or a combination of local Lipschitz continuity and uniform linear growth, as in \citet{LMSWY23}. In some cases, specific hypotheses allow for discontinuous coefficients in one-dimensional problems. For example, \citet{BJ20} consider 
\( b(t,x,\mu) = \lambda(\mu(-\infty,x]) \), where \(\lambda\) is Lipschitz continuous, while \citet{LRS22} explore the case 
\( b(t,x,\mu) = b_1(x) + b_2(x,\mu) \), with \( b_1 \) Lipschitz continuous on \( (-\infty,0) \) and \( (0,+\infty) \), and \( b_2 \) Lipschitz continuous in both variables.
Across these studies, strong or weak convergence results are derived, often
with explicit convergence rates for discretized particle systems towards 
the McKean-Vlasov SDE. In this work, the main novelty is to consider drifts of the form
$b(t,x,\mu) := K \ast \mu(x)$ where $K$ is singular and 
$\mu$ is in a certain class of measures. In this case, the drift 
is not Lipschitz continuous in the Wasserstein norm for a generic measure $\mu$, nor is it 
Hölder continuous in $x$ or of linear growth in $x$, see Remark \ref{rem1}.\\

In general, there are few results of convergence of the EM 
scheme when applied to non-interacting SDEs with irregular coefficients 
and this is a dynamic area of research.
Several works, such as \citet{NT18,DG20,YCS21,NS21,JIP23}
 and references therein, study weak and strong convergence rates 
 of the EM scheme under various drift irregularity assumptions. 
 These include cases of possible discontinuity and exponential or 
 polynomial growth.
When the drift is a singular \textcolor{black}{time-space}
 function of \textcolor{black}{$L^q([0,T],L^p(\R^d))$-type, 
 satisfying some Krylov-Röckner \cite{Krylov-Rockner} condition,}
\citet{ll22} obtained a strong convergence rate while
\citet{jourdain}  obtained convergence rates for the density of the EM scheme.
 Nonetheless, in our case, it is not possible to
analyze the system of interacting particles as a single 
high-dimensional SDE (of dimension \(N \times d\)) 
 because the dependence in the dimension
 of the rate of convergence  
 leads to rate constants that grow unbounded and tend to infinity as
  \(N \to \infty\). \\

Finally, the idea of using a sequence of mollifiers 
in the context of singular drifts was introduced
by \citet{oelschlager1985law}, then used
in \citet{meleard1987propagation} and 
adopted in many other works since then, e.g. in \citet{JM98,ORT,ORT24,HJM24}.\\

\textbf{Plan of the paper:} In Section \ref{sec2}, we present the
 problem's setting along with the notations and assumptions 
(originally 
 introduced in \cite{ORT}). This is 
 followed by the statement and a discussion of the main results. 
 Section \ref{sec3} contains detailed proof of the main results
 establishing the two convergence rates. Finally, Section \ref{sec4}
  illustrates applications for different examples of 
  singular kernels and provides a discussion on the 
  corresponding convergence rates in each case.


\section{Results}\label{sec2}

In this section, we start by
introducing the main assumptions and notations. Then, we state
our main results on the convergence of the empirical measure and the 
density of a given particle towards
 the solution of the Fokker-Planck equation.

 
\subsection{Notations and assumptions}\label{sec2.1}
We begin by introducing some notations.

\begin{itemize}
    \item We denote by $\bar{p}$ the 
    Hölder conjugate of $p \in [1, +\infty]$
        : $\frac{1}{p} + \frac{1}{\bar{p}} = 1$.
    \item The unit ball of $\mathbb{R}^d$ is denoted by $B_1$.
    \item For all $ p\geq 1$, we denote by $\| \cdot \|_{L^p}$ the
    $L^p$ norm on $\mathbb{R}^d$.
    \item For all $ p, q \geq 1$, we define
     $\| \cdot \|_{L^p \cap L^q} :=
    \| \cdot \|_{L^p} + \| \cdot \|_{L^q}$.
    \item For all $T >0$, we define  $\| \cdot \|_{T, L^p}:= 
    \sup_{t \in [0, T]} \| \cdot \|_{L^p}$.
    \item The Hölder semi-norm with parameter $\zeta$ is denoted by
     $[f]_{\zeta} := \sup_{x \neq y} \frac{|f(x) - f(y)|}{|x - y|^\zeta}$.
    \item The Gaussian density on $\mathbb{R}^d$, centered and with covariance matrix 
    $ctI_d$, is denoted by $g_c(t, x)$.
    \item $C$ represents a generic constant that may change throughout the article.
\end{itemize}

Throughout this work, our kernel $K$ is assumed to satisfy the following conditions:

\begin{itemize}[label={}, leftmargin=40pt]
    \item[\boldmath{$(A_K):$}]~\hypertarget{assumption:AK}{}
    \begin{itemize}
        \item[($A^K_i$)] $K \in L^p(B_1)$, for some $p \in [1, +\infty]$;
        \item[($A^K_{ii}$)] $K \in L^q(B_1^c)$, for some $q \in [1, +\infty]$;
        \item[($A^K_{iii}$)] There exists $r \geq \max(\bar{p}, 
        \bar{q})$, $\zeta \in (0, 1]$, and $C > 0$ such 
        that for all $f \in L^1 \cap L^r$, we have
        \[
            [ K \ast f ]_{\zeta} \leq C\|f\|_{L^1 \cap L^r}.
        \]
    \end{itemize}
\end{itemize}

Note that, by Hölder's inequality, ($A^K_i$) and ($A^K_{ii}$) imply the following 
useful inequality:
\[\|K \ast f \|_{L^{\infty}} \leq 
C \|f\|_{L^1 \cap L^r},\]
where $C$ depends on $K$ and on the dimension $d$.

\begin{remark}\label{rem1}
As mentioned in the Introduction, for this type of numerical approximation,
an example of structural assumption on the drift $b(t,x,\mu)$ (see
\cite{BH19,L24}) is \textcolor{black}{Lipschitz continuous with respect to 
the
measure argument in a $L^p$-Wasserstein space (for $0< p<+\infty$)}, namely
\[
|b(t,x,\mu) - b(t,x,\nu)| \le C\, \mathbb{W}_p(\mu,\nu),
\]
where $\mathbb{W}_p$ denotes the $L^p$-Wasserstein distance.

In the present work, however, the drift has the specific convolutional
structure $b(t,x,\mu) = K * \mu(x)$ with a singular kernel $K$.
In this setting, a regularity condition expressed in terms of a
generic Wasserstein distance is not well suited.
Indeed, if $\mu=\delta_x$ and $\nu=\delta_y$, then
$\mathbb{W}_1(\mu,\nu)=|x-y|$, while
$$
|b(t,z,\mu)-b(t,z,\nu)|
= |K(z-x)-K(z-y)|,
$$
which may be unbounded when $K$ has singularities. \color{black}We may
also observe that a condition of the form $|K \ast \mu(z) - K\ast \nu(z)| 
\leq C \mathbb{W}_p(\mu,\nu)$ amounts, taking $p \in (0,1]$ for 
simplicity, to have, for any coupling $\pi \in \Pi(\mu,\nu)$ between 
$\mu$ and $\nu$,
$$| \int K(z-x) - K(z-y) d\pi(x,y)| \leq C \int_{\pi \in \Pi(\mu,\nu)}
 |x-y|^p d\pi(x,y).$$
Such estimate requiring to show the $p$-Hölder continuity of $K$ over minimal couplings
of $\mu$ and $\nu$ is seemingly unsuitable for the class of singular kernels 
considered in our 
framework. \color{black}

A more appropriate framework is provided by the regularity properties
of the marginal laws of the McKean-Vlasov SDE \eqref{SDE}.
Under the assumptions considered here, these laws admit densities
belonging to $L^1\cap L^r$.
Consequently, assumption $(A^K_{iii})$ expresses a Hölder-type
regularity of the convolution operator in terms of the
$L^1\cap L^r$ norm.
\end{remark}

\begin{remark}\label{remlplq}
    Among singular kernels of interest in the literature 
    are those that satisfy the Krylov-Röckner condition :
    \color{black}
        $$ K \in L^q([0,T],L^p(\mathbb{R}^d))\quad
         \text{with}\quad
        p,q \in [1, +\infty], \quad \frac{d}{p} + 
        \frac{2}{q} < 1 \quad \text{and} \quad T<\infty,$$\color{black}
    as this allows one to prove the existence of a 
    unique strong solution for the associated McKean-Vlasov SDE 
    \cite{Krylov-Rockner}, but also 
    propagation of chaos for the associated system of particles
    \cite{tomasevic23}. For $K$
    independent of time, 
    the Krylov-Röckner condition is equivalent here to having 
    $K \in L^p$ for some $p \in (d, +\infty]$. This 
    is a stronger integrability condition than the one
    required in ($A^K$).

    On the other hand, a generic kernel $K \in L^p, \,p>d$, does not necessarily
    have enough structure 
    to ensure 
    that it satisfies condition $(A^K_{iii})$. However, one can 
    consider specific examples of $L^p$ kernels for $p > d$ 
    exhibiting singularities and verify that they 
    satisfy $(A^K_{iii})$. For instance, consider the kernel 
    \begin {equation}\label{eq:example}
        K(x) = \frac{x}{|x|^{\alpha}} \chi(x),\,\, x \in \mathbb{R}^d,
    \end{equation}
    where $\alpha \in (1, 2)$ and $\chi$ is a smooth 
    function equal to $1$ on $B_1$ and $0$ outside $B_2$. 
    In this case, $K \in L^p$ for $p 
    \in (1, \frac{d}{\alpha - 1})$. Therefore, this singular 
    kernel satisfies the Krylov-Röckner condition. Notice that in a more 
    singular regime, when $\alpha \geq 2 $, such 
    kernels no longer satisfy the
    Krylov-Röckner condition but can still fall within the scope of our results, 
    see Sections 4 
and 5 in \cite{ORT} and the following
Section 4 for more details.

    Let us now verify that the kernel $K$, defined by \
    eqref{eq:example}, satisfies $(A^K_{iii})$. 
    Take $z > d$ and $r \geq \max(\bar{p}, z)$. Then, 
    for any $f \in L^1 \cap L^r$, using 
    Young's inequality with $\frac{1}{p} + \frac{1}{m} = 
    1 + \frac{1}{z}$ and that $1 \leq m \leq \bar{p} \leq r$, we have
    \[
        \|K \ast f\|_{L^z} \leq C \|K\|_{L^p} \|f\|_{L^m} 
        \leq C \|f\|_{L^1 \cap L^r}.
    \]

    Moreover, one can easily verify 
    that the matrix-valued kernel $\nabla K $ is in $L^1$ for $d \geq 2$. 
    Knowing that $1 \leq z \leq r$, we obtain
    \[
        \|\nabla K \ast f\|_{L^z} \leq
         C \|\nabla K\|_{L^1} \|f\|_{L^z}
          \leq C \|f\|_{L^1 \cap L^r}.
    \]

    This implies that 
    $K \ast f \in W^{1,z}$ for 
    some $z > d$. By Morrey's inequality (see Thm. 9.12 in \cite{brezis11}), 
    it follows that $K \ast f$ is Hölder continuous 
    with exponent $\zeta := 1 - \frac{d}{z}$. Thus, $K$ 
    satisfies condition $(A^K_{iii})$.
\end{remark}

Let $A > 0$. In the following, we denote by 
    $F_A : x \in \mathbb{R}^d \to F_A(x) = (F_A(x)_1, 
\cdots, F_A(x)_d) \in \mathbb{R}^d$
a Lipschitz continuous and bounded function verifying

\[
\forall i \in \{1, \dots, d\}, \quad F_A(x)_i =
\begin{cases} 
    x_i & \text{if } |x_i| \leq A, \\
    A  & \text{if } x_i > A+1, \\
    -A  & \text{if } x_i < -A-1.
\end{cases}
\]

Let $V : \mathbb{R}^d \to \mathbb{R}_+$ be a smooth 
probability density function, and suppose that $V$ has a 
compact support. For any $x \in \mathbb{R}^d$ and $N \in \mathbb{N}^{*}$,
 define
\[
V^N(x) := N^{d\alpha} V(N^\alpha x),
\]
for some $\alpha \in [0, 1]$. In the sequel,
 $\alpha$ will be restricted to an interval 
$(0, \alpha_0)$, see Assumption \hyperlink{assumption:Aalpha}{\boldmath{$(A_\alpha)$}} below.\\

Let $h >0$, $N \in \mathbb{N}^*$ and consider the
 following 
 EM scheme: 
\begin{equation}\label{eds_euler}
    \begin{cases}
        X_t^{i,N,h} = X_0^{i,N,h} + \displaystyle \int_0^t 
        F_A\left(\frac{1}{N}\sum_{k=1}^N K\ast V^N(X_{\tau_s^h}^{i,N,h}
        - X_{\tau_s^h}^{k,N,h})\right)ds 
        + \sqrt{2} W_t^{i}, \quad 
        t \geq 0, \, 1 \leq i \leq N,\\
        X_0^{i,N,h} = X_0^{i,N}, \, 1 \leq i \leq N, \quad 
        \text{independent of} \, \, \{W^i, 1 \leq i \leq N\},
    \end{cases}
\end{equation}
where $\{(W^i_t)_{t \in [0,T]}, i \in \mathbb{N}^*\}$ is a 
family of independent Brownian motions valued in 
$\mathbb{R}^d$, defined on a filtered probability 
space $(\Omega, \mathcal{F}, (\mathcal{F}_t)_{t \geq 0}, 
\mathbb{P})$ and $\tau_s^h := \lfloor \frac{s}{h} \rfloor h$.\\

For $t \geq 0$, denote the  empirical measure on $
\mathbb{R}^d$ of the $N$ discretized SDEs \eqref{eds_euler} at time $t$ by
\[
\mu^{N,h}_{t} := \frac{1}{N} \sum_{i=1}^N \delta_{X_{t}^{i,N,h}}
\]
and the mollified empirical measure by 
\[\tilde{\mu}_{t}^{N,h} := V^N \ast \mu_{t}^{N,h}.\]

Throughout, it will be assumed that the parameters $(r,p,q,\zeta)$ 
are given 
in \hyperlink{assumption:AK}{\boldmath{$(A_K)$}} and are
such that $r \geq \max(\bar{p}, \bar{q})$. The restriction with respect to the parameter
$\alpha$ and on the initial conditions 
are given by the following assumptions:\\

\noindent {\boldmath{$(A_\alpha)$}}~\hypertarget{assumption:Aalpha}{} The 
parameter $\alpha$ satisfies
\[0 < \alpha < \alpha_0 := \frac{1}{d + 2d\left(\frac{1}{2} - 
\frac{1}{r}\right) \vee 0}.\]

\noindent \boldmath{$(A_0)$} \unboldmath{}~\hypertarget{assumption:A}{} 
For all $m \geq 1$,
\[
\sup_{1 \leq i \leq N, N \in \mathbb{N}^*} \mathbb{E}
|X^{i,N}_0|^m < \infty \quad \text{and} \quad 
\sup_{N \in \mathbb{N}^*} \mathbb{E} \left[\left\|{\mu}_0^{N,h} 
\ast V^N\right\|_{L^r}^m \right] < \infty.
\]
We recall the following useful properties of the Gaussian density:

\begin{equation}\label{eq:heat_kernel}
    \forall f \in L^p, \forall t \in (0, T],  \exists C > 0,\quad
    \|\nabla g_2(t,\cdot)\ast f\|_{L^p} \leq \frac{C}{\sqrt{t}} \|f\|_{L^p},
\end{equation}

\begin{equation}\label{eq:gaussian_density}   
    \forall c > 2, \exists C > 0, \forall t \in (0, T], \forall
    x \in \mathbb{R}^d, \quad
  |\nabla g_2(t, x)| \leq \frac{C}{\sqrt{t}} g_c(t,x).
\end{equation}

In this work, the solutions of the Fokker-Planck equation 
\eqref{PDE} will be understood in the sense of the following definition.
\begin{definition}\label{def}
    Given $K$ satisfying $({A_i^K})$ and $({A_{ii}^K})$, $u_0 \in L^1 
    \cap L^r$ with $r \geq \max(\bar{p}, 
    \bar{q})$ and $T > 0$, a function $u$ on $[0, T]
     \times \mathbb{R}^d$ is said to be a mild solution
     to \eqref{PDE} on $[0, T]$ if:  
    \begin{itemize}
        \item[(i)] $u \in C([0, T]; L^1 \cap L^r)$;
        \item[(ii)] $u$ satisfies the integral equation
    \end{itemize}
   \begin{equation}
        u_t = g_2(t,\cdot) \ast u_0 - \int_0^t \nabla \cdot
         \left(g_2(t-s,\cdot)\ast \big(u_s \, K \ast u_s\big)\right) ds, \quad 0 \leq t \leq T. 
    \end{equation}
\end{definition}

In \cite{ORT}, it is shown that there exists $T_{\max} > 0$ such that
the Fokker-Planck
equation \eqref{PDE} admits a unique mild solution in the sense of
Definition \ref{def}
 on $[0, T]$, for any $T < T_{\max}$. In all the following results, we denote by 
\( T \in (0, T_{\max}) \) a fixed time horizon 
{for which the mild 
solution in
Definition \ref{def} exists and is unique.
On the interval $[0, T]$, we also have
wellposedness (in the weak sense) of the McKean-Vlasov SDE \eqref{SDE},
 where $\mathcal{L}(X_t)$ 
 \textcolor{black}{(or more precisely its Lebesgue density function)}
 is the mild solution to \eqref{PDE}.}\\

In all the following, the cut-off $A$ of the function $F_A$ used in the EM 
scheme \eqref{eds_euler} is chosen to satisfy the following condition:\\

\noindent \boldmath{$(A_{F_A})$}\unboldmath{}~\hypertarget{assumption:A-cut-off}{}
The parameter $A$ satisfies
\[
A \geq  \sup_{t \in [0, T]} \|K \ast u_t\|_{L^\infty},
\]

\begin{remark}
    In practice, it is possible to choose $A$ in terms
    of the inputs $K$, $T$ and $u_0$. Indeed,
    by \hyperlink{assumption:AK}{$(A^K_{iii})$}, 
    $$ \|K \ast u_t\|_{L^\infty} \leq C_{K,d} \|u_t\|_{L^1 \cap L^r},$$
    where $C_{K,d}$ can be made explicit in terms of $K$ and $d$. 

    \begin{itemize}

    \item When the kernel is bounded, $C_{K,d} = \|K\|_{L^\infty}$ and $r = 1$, hence it is enough to choose
    $A \geq \|K\|_{L^\infty}$.
    
    \item For Krylov-Röckner type kernels discussed in Remark \ref{remlplq}, 
    $K \in L^p, p >d$ and 
    $\|K\ast u_t\|_{L^\infty} \leq \|K\|_{L^p} \|u_t\|_{L^{\bar{p}}}$.
     Using the mild formulation and the fact that 
     $\forall t \in (0, T], \exists C > 0,
    \|\nabla g_2(t,\cdot)\ast f\|_{L^{\bar{p}}}
     \leq C t^{-\frac{1}{2}-\frac{d}{2p}} \|f\|_{L^1}$, we obtain 
 $$\|u\|_{t, L^{\bar{p}}}
    \leq \|u_0\|_{L^{\bar{p}}} + C \|K\|_{L^p} \int_0^t 
    (t-s)^{-\frac{1}{2}-\frac{d}{2p}} 
    \|u\|_{s, L^{\bar{p}}}\, ds$$
    And using Grönwall's lemma (see for example
    Lemma 6.19 in \cite{DN02}), \[ \|u\|_{T, L^{\bar{p}}} 
    \leq \|u_0\|_{L^{\bar{p}}} E_{\frac{1}{2}-\frac{d}{2p}}(C 
    \|K\|_{L^p}T^{\frac{1}{2}-\frac{d}{2p}}), \] 
    where $E_{\alpha}$ is the Mittag-Leffler function of order 
    $\alpha$. Hence it is possible to explicitly
    lower bound $A$ depending 
    on
    only $T, \|K\|_{L^p}, d, p$ and $\|u_0\|_{L^{\bar{p}}}$.

    \item For general kernels satisfying \hyperlink{assumption:AK}{$(A_K)$}, using 
    the mild formulation and the properties of the heat kernel, it 
    is immediate to obtain for $t \leq T$,
    $$ \|u\|_{t, L^1 \cap L^r}
    \leq \|u_0\|_{L^1 \cap L^r} + C \sqrt{t} \|u\|_{t, L^1 \cap L^r}^2.$$
    Then, for $4 C \sqrt{T}\|u_0\|_{L^1 \cap L^r} < 1$, 
    we have $$\|u\|_{t, L^1 \cap L^r} \leq \frac{1 - 
    \sqrt{1 - 4C\sqrt{T}\|u_0\|_{L^1 \cap L^r}}}{4C\sqrt{T}}.$$

   We remark that the obtained bound imposes a 
   restriction on either the time horizon \(T\) or 
   the size of the initial data \(\|u_0\|_{L^1 \cap L^r}\). However, 
   this provides only a preliminary method
   and is not expected to yield the optimal choice of \(A\). For a given 
   kernel \(K\), a more precise analysis at the PDE level, exploiting the
   regularity properties of \(u_t\), should allow one to obtain sharper bounds for 
   \(\|K \ast u_t\|_{L^\infty}\), and consequently for \(A\).
    \end{itemize}

\end{remark}


\subsection{Main results}

The first main result is the following claim,
 whose proof is 
detailed in Section \ref{secthm1}. The goal 
is to prove the convergence of the mollified empirical measure for the EM discretized 
system of particles towards the solution of the PDE \eqref{PDE} and 
to provide a convergence rate.

\begin{theorem}\label{thm.1}
    Under the assumptions
    \hyperlink{assumption:A}{\boldmath{$(A_0)$}},
    \hyperlink{assumption:AK}{\boldmath{$(A_K)$}},
    \hyperlink{assumption:Aalpha}{\boldmath{$(A_\alpha)$}} and
    \hyperlink{assumption:A-cut-off}{\boldmath{$(A_{F_A})$}},
     for any $\epsilon > 0$ and any $m \geq 1$, there exists a 
    constant $C > 0$ such that for any $N \in \mathbb{N}^{\ast}$, for any $h>0$,

    \[
    \left\| \left\| \tilde{\mu}^{N,h} - u \right\|_{T,L^1\cap L^r} \right\|_{L^m(\Omega)}
    \leq C \left(\left\| \left\| \tilde{\mu}_0^{N,h} - u_0
     \right\|_{L^1\cap L^r} 
     \right\|_{L^m(\Omega)}
    + N^{-\rho+\epsilon} + N^{\frac{d \alpha}{\bar{r}}}
    h^{\frac{\zeta}{2}}\right).
    \]
    where $\rho = \min(\alpha \zeta,\frac{1}{2}(1-\alpha(d + \chi_r)))$
    and $\chi_r = \max(0, d(1-2/r))$.
\end{theorem}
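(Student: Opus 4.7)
}

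The plan is to derive a mild (Duhamel) formulation for $\tilde\mu_t^{N,h}$ analogous to Definition \ref{def}, subtract the one for $u_t$, and carefully decompose the difference into terms that can be absorbed by Gronwall's lemma or bounded by the stated error rates. Applying It\^o's formula to $x\mapsto V^N(x-X_t^{i,N,h})$, averaging over $i$, and inverting $\partial_t-\Delta$ via the heat semigroup, one obtains
\begin{equation*}
\tilde\mu_t^{N,h} \;=\; g_2(t,\cdot)\ast\tilde\mu_0^{N,h}
\;-\;\int_0^t \nabla g_2(t-s,\cdot)\ast \mathcal V_s^{N,h}\,ds
\;+\;\tilde M_t^{N,h},
\end{equation*}
where $\mathcal V_s^{N,h}(x):=\frac{1}{N}\sum_i V^N(x-X_s^{i,N,h})\,F_A\!\big(K\ast V^N\ast\mu_{\tau_s^h}^{N,h}(X_{\tau_s^h}^{i,N,h})\big)$ and $\tilde M_t^{N,h}$ is the stochastic convolution of the $M^{N,h}$-type martingale with $g_2(t-\cdot,\cdot)$. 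Subtracting the mild equation for $u_t$ yields
\begin{equation*}
\tilde\mu_t^{N,h}-u_t \;=\; g_2(t,\cdot)\ast(\tilde\mu_0^{N,h}-u_0)
\;+\;\int_0^t\nabla g_2(t-s,\cdot)\ast\bigl[u_s\,K\ast u_s-\mathcal V_s^{N,h}\bigr]\,ds
\;+\;\tilde M_t^{N,h}.
\end{equation*}

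Next I would decompose $u_s\,K\ast u_s-\mathcal V_s^{N,h}$ by inserting intermediate quantities in the following order:
(i) the \emph{time-discretization} error $\mathcal V_s^{N,h}-\tilde\mu_s^{N,h}\,F_A(K\ast V^N\ast\mu_s^{N,h})$, controlled through the Hölder continuity of $K\ast V^N$ combined with the a.s.\ estimate $|X_s^{i,N,h}-X_{\tau_s^h}^{i,N,h}|\leq A h+\sqrt{2}\,|W_s^i-W_{\tau_s^h}^i|$; the Hölder constant for $K\ast V^N$ scales like $N^{d\alpha/\bar r}$ after interpolating $(A^K_{iii})$ against $\|V^N\|_{L^r}$, and the stochastic increment is of order $\sqrt h$, giving the factor $N^{d\alpha/\bar r}h^{\zeta/2}$;
(ii) the \emph{cut-off} error, bounded via the $L^\infty$-estimate $\|K\ast f\|_\infty\lesssim\|f\|_{L^1\cap L^r}$ (a consequence of $(A^K_i),(A^K_{ii})$) which shows that, up to a Gronwall-absorbable remainder, $F_A$ can be removed;
(iii) the \emph{convolution-inside} error $\tilde\mu_s^{N,h}\,K\ast(V^N\ast\mu_s^{N,h}-u_s)$, controlled through $(A^K_{iii})$ by $\|\tilde\mu_s^{N,h}-u_s\|_{L^1\cap L^r}$ plus a pure mollification error of order $N^{-\alpha\zeta}$ coming from $\|V^N\ast u_s-u_s\|_{L^r}\lesssim N^{-\alpha\zeta}[u_s]_\zeta$;
(iv) the \emph{outside} factor $(\tilde\mu_s^{N,h}-u_s)\,K\ast u_s$, controlled again by the running norm.

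For each of these pieces, apply $\nabla g_2(t-s,\cdot)\ast$, take the $L^1\cap L^r$ norm, and use the semigroup estimate \eqref{eq:heat_kernel} to generate an integrable singularity $(t-s)^{-1/2}$. This produces an inequality of the form
\begin{equation*}
\Phi(t)\;\leq\; C\,\mathbb E\bigl\|\tilde\mu_0^{N,h}-u_0\bigr\|_{L^1\cap L^r}^m
+C\bigl(N^{-\alpha\zeta}+N^{d\alpha/\bar r}h^{\zeta/2}\bigr)^m
+C\,\mathbb E\|\tilde M^{N,h}\|_{T,L^1\cap L^r}^m
+C\int_0^t(t-s)^{-1/2}\Phi(s)\,ds,
\end{equation*}
with $\Phi(t):=\mathbb E\sup_{s\leq t}\|\tilde\mu_s^{N,h}-u_s\|_{L^1\cap L^r}^m$, to which a generalized Gronwall (Abel) inequality applies.

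The remaining task is the stochastic convolution $\tilde M^{N,h}$: I would bound it via the factorization method (Da~Prato--Kwapie\'n--Zabczyk) with exponent $\beta\in(0,1/2)$, reducing the supremum-in-time $L^m(\Omega;L^r)$ estimate to a BDG bound on the auxiliary $Y$-process. Since $\|\nabla V^N\|_{L^r}^2\sim N^{2\alpha(d(1-1/r)+1)}$ and the independence across $i$ produces the usual $N^{-1/2}$ gain, one gets $\mathbb E\|\tilde M_t^{N,h}\|_{L^1\cap L^r}^m\lesssim N^{-\frac{m}{2}(1-\alpha(d+\chi_r))+m\epsilon}$, producing the $N^{-\frac12(1-\alpha(d+\chi_r))+\epsilon}$ contribution to $\rho$. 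Combining with the $N^{-\alpha\zeta}$ term from step (iii) yields $\rho=\min(\alpha\zeta,\tfrac12(1-\alpha(d+\chi_r)))$, and the constraint $(A_\alpha)$ is precisely what ensures $1-\alpha(d+\chi_r)>0$. The main obstacle is the item (i) analysis: the time-discretization error mixes Hölder regularity of $K\ast V^N$ (whose relevant constant grows like $N^{d\alpha/\bar r}$) with the mean-square control of $|X_s^{i,N,h}-X_{\tau_s^h}^{i,N,h}|$, and producing a clean $h^{\zeta/2}$ factor while keeping the $N$-exponent sharp is the delicate balancing act that fixes the final rate.
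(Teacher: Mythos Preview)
Your overall architecture---mild formulation via It\^o, subtraction of the two Duhamel formulas, a Gronwall-absorbable part plus error terms for time discretization and stochastic convolution---matches the paper. The stochastic-convolution bound you sketch via factorization is an alternative to the paper's Lemma~\ref{lemma} (taken from \cite{ORT}); either route gives the $N^{-\frac12(1-\alpha(d+\chi_r))+\varepsilon}$ contribution.

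There is, however, a genuine gap in how you locate the $N^{-\alpha\zeta}$ term. Your item~(i) is the difference
\[
\mathcal V_s^{N,h}(x)-\tilde\mu_s^{N,h}(x)\,F_A\bigl(K\ast\tilde\mu_s^{N,h}(x)\bigr)
=\frac{1}{N}\sum_i V^N(x-X_s^{i,N,h})\Bigl[F_A\bigl(K\ast\tilde\mu_{\tau_s^h}^{N,h}(X_{\tau_s^h}^{i,N,h})\bigr)-F_A\bigl(K\ast\tilde\mu_s^{N,h}(x)\bigr)\Bigr],
\]
which contains \emph{two} discrepancies, not one: the time shift $\tau_s^h\to s$ \emph{and} the evaluation point $X_s^{i,N,h}\to x$. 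Your analysis of~(i) only treats the first (yielding $N^{d\alpha/\bar r}h^{\zeta/2}$) and silently discards the second. In the paper this second piece is isolated as the term $E_t^{N,h}$, and \emph{it} is the true source of $N^{-\alpha\zeta}$: since $V^N(x-\cdot)$ is supported in a ball of radius $\sim N^{-\alpha}$, the $\zeta$-H\"older bound $(A^K_{iii})$ applied to $K\ast\tilde\mu_s^{N,h}$ gives $|K\ast\tilde\mu_s^{N,h}(X_s^{i,N,h})-K\ast\tilde\mu_s^{N,h}(x)|\lesssim N^{-\alpha\zeta}\|\tilde\mu_s^{N,h}\|_{L^1\cap L^r}$, and the last factor is controlled uniformly in $N$ by Proposition~\ref{prop1.5}.

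Conversely, your claimed source of $N^{-\alpha\zeta}$ in item~(iii) does not work. Since $V^N\ast\mu_s^{N,h}=\tilde\mu_s^{N,h}$, item~(iii) is exactly $\tilde\mu_s^{N,h}\,K\ast(\tilde\mu_s^{N,h}-u_s)$, which is entirely absorbed by Gronwall; no $V^N\ast u_s$ appears. The estimate $\|V^N\ast u_s-u_s\|_{L^r}\lesssim N^{-\alpha\zeta}[u_s]_\zeta$ you invoke would require $u_s$ itself to be $\zeta$-H\"older, which is neither assumed nor available from Definition~\ref{def} (only $K\ast f$ is H\"older by $(A^K_{iii})$). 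The fix is simply to split item~(i) into the two pieces above and drop the mollification claim from~(iii).
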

The convergence rate requires a polynomial compromise between
the number of particles $N$ which tends to infinity and the
time step $h$ which tends to zero. See Section \ref{sec4}
for explicit convergence rates for different examples of singular kernels.
\\

Now $\forall i \in \{1,...,N\}$, $\forall t \in [0,T]$, denote
the density of $X_t^{i,N,h}$ by  $u^{i,N,h}_t$.
 The second main result is the following claim, whose proof is
detailed in Section \ref{secthm2} and relies on the previous analysis.

\begin{theorem}\label{thm.2}
 Assume 
 \hyperlink{assumption:A}{\boldmath{$(A_0)$}},
    \hyperlink{assumption:AK}{\boldmath{$(A_K)$}},
    \hyperlink{assumption:Aalpha}{\boldmath{$(A_\alpha)$}},
    \hyperlink{assumption:A-cut-off}{\boldmath{$(A_{F_A})$}}
    and that 
    $\forall i \in \{1,...,N\},\,\mathcal{L}(X_0^{i,N,h}) = u_0$. Then, 
 for all 
 $ 1 < p < \frac{d}{d-1}$ there exists a constant 
 $C > 0$ such that for any $N \in \mathbb{N}^{\ast}$, for any $h>0$,
 for almost any $x \in \mathbb{R}^d$, for 
 any $t \in [0,T]$, and for any $i \in \{1,...,N\}$,
\[
|u^{i,N,h}_t(x) - u_t(x)| 
\leq C \left( \left\| \left\| \tilde{\mu}_0^{N,h} - u_0
\right\|_{L^1\cap L^r} 
\right\|_{L^{\bar{p}}(\Omega)}+ N^{-\rho + \epsilon} + 
N^{\frac{d \alpha}{\bar{r}}} h^{\frac{\zeta}{2}} \right) 
\left(\int_{\mathbb{R}^d}
g_{c/p}(t, x-y)  u_0(y)\,dy\right)^{1/p},
\]
where $\rho$ 
is defined as in Theorem \ref{thm.1}.
\end{theorem}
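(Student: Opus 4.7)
The strategy, inspired by \citet{jourdain}, is to expand both $u_t$ and $u_t^{i,N,h}$ as perturbations of the heat semigroup acting on $u_0$, then to bound each term carefully. By Definition \ref{def},
\[
u_t(x) = (g_2(t) \ast u_0)(x) - \int_0^t \nabla g_2(t-s) \ast (u_s\, K \ast u_s)(x)\, ds.
\]
For the Euler scheme, I would apply Itô's formula to $\varphi(X_t^{i,N,h})$ for smooth $\varphi$, take expectations, and integrate by parts in space; using $\mathcal{L}(X_0^{i,N,h}) = u_0$ together with the smoothness of $F_A$ and $V^N$, this yields the analogous mild identity
\[
u_t^{i,N,h}(x) = (g_2(t) \ast u_0)(x) - \int_0^t \nabla g_2(t-s) \ast J_s^{i,N,h}(x)\, ds,
\]
where the vector-valued current $J_s^{i,N,h}$ is characterized via test functions by
\[
\int_{\mathbb{R}^d} \varphi(y)\, J_s^{i,N,h}(y)\, dy = \mathbb{E}\bigl[\varphi(X_s^{i,N,h})\, F_A\bigl(\textstyle\sum_{k=1}^N K \ast V^N(X_{\tau_s^h}^{i,N,h} - X_{\tau_s^h}^{k,N,h})\bigr)\bigr].
\]

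Subtracting the two representations reduces the problem to estimating $\int_0^t \nabla g_2(t-s) \ast (J_s^{i,N,h} - u_s K \ast u_s)(x)\, ds$. I would decompose the inner difference into four pieces: (i) a time-discretization error coming from evaluating the drift at $\tau_s^h$ instead of $s$, bounded uniformly in space through the Hölder regularity of $K \ast f$ from $(A^K_{iii})$ combined with the $\sqrt{h}$-continuity of the Euler trajectories, yielding the $h^{\zeta/2}$ factor; (ii) the replacement of $K \ast \tilde\mu^{N,h}_s$ by $K \ast u_s$, bounded in $L^\infty$ via $\|K\ast f\|_\infty \leq C\|f\|_{L^1\cap L^r}$ together with Theorem \ref{thm.1}, producing the $N^{-\rho+\epsilon}$ and $N^{d\alpha/\bar r} h^{\zeta/2}$ contributions; (iii) the cutoff $F_A$, which becomes transparent for $A$ large enough since the limiting drift is $L^\infty$-bounded by $(A_K)$; and (iv) the residual density-coupling term $(u_s^{i,N,h} - u_s) K \ast u_s$, which feeds the error back into the quantity we aim to estimate.

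Combining the decomposition with the pointwise heat kernel bound $|\nabla g_2(t-s,\cdot)| \leq (C/\sqrt{t-s})\, g_c(t-s,\cdot)$ from \eqref{eq:gaussian_density}, and denoting the aggregated error by $E$, I reach a Volterra-type inequality for $F(t,x) := |u_t^{i,N,h}(x) - u_t(x)|$ of the schematic form
\[
F(t,x) \leq C \int_0^t \frac{(g_c(t-s) \ast F(s,\cdot))(x)}{\sqrt{t-s}}\, ds + C\, E \int_0^t \frac{(g_c(t-s) \ast u_s^{i,N,h})(x)}{\sqrt{t-s}}\, ds.
\]
To produce the sharp factor $(g_{c/p}(t) \ast u_0)(x)^{1/p}$, I would insert the Aronson-type Gaussian upper bound $u_s^{i,N,h}(y) \leq C\,(g_{c'}(s) \ast u_0)(y)$ (valid because the Euler drift is bounded by $A\sqrt{d}$) and apply Hölder's inequality inside the convolution with conjugate exponents $p,\bar p$; combined with the algebraic identity $g_c(t,z)^p = C\, t^{-d(p-1)/2}\, g_{c/p}(t,z)$ and with the Gaussian composition $g_c(t-s) \ast g_{c/p}(s)$, this extracts exactly the exponent $1/p$ and the parameter $c/p$ after integrating the time singularity $(t-s)^{-1/2}$. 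Closing the estimate is then a Gronwall-type iteration of the self-coupling term, where the Beta-function integrability $\int_0^t (t-s)^{-1/2} s^{-1/2}\, ds < \infty$ makes successive iterates form a convergent series with limit $C\, E\, (g_{c/p}(t) \ast u_0)(x)^{1/p}$.

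The restriction $1 < p < d/(d-1)$ (equivalently $\bar p > d$) is dictated by the integrability requirements of the weighted heat kernel in the Hölder step. The main obstacle will be producing the precise Gaussian factor with parameter $c/p$ and exponent $1/p$ rather than the cruder $g_c \ast u_0$ bound: this requires the delicate Hölder splitting of the Gaussian weight and careful tracking of how the Gaussian parameters compose along successive iterations of the Duhamel expansion, so that the iterated constants remain uniformly bounded on $[0, T]$.
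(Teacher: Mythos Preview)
Your approach matches the paper's: the same Duhamel representations (the paper obtains yours for $u_t^{i,N,h}$ via Proposition~\ref{prop1.3}), the same decomposition (your pieces (i), (ii), (iv) are exactly the paper's $E^1_t$, $E^2_t$, $E^3_t$, with (iii) absorbed by taking $A>\|K\ast u\|_{T,L^\infty}$), the same Aronson bound, and the same Gr\"onwall closing after normalizing by $(g_{c/p}(t)\ast u_0)^{1/p}$.

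One small correction to your schematic Volterra inequality: the H\"older split with exponents $(p,\bar p)$ is applied in the \emph{probability space} $\Omega$, separating $\mathbb{E}\bigl[\text{(random error)}\cdot g_c(t-s,x-X_s^{i,N,h})\bigr]$ into $\|\text{error}\|_{L^{\bar p}(\Omega)}\cdot \mathbb{E}\bigl[g_c(t-s,x-X_s^{i,N,h})^p\bigr]^{1/p}$, rather than ``inside the spatial convolution'' after the expectation has already collapsed to $g_c\ast u_s^{i,N,h}$. This matters because the errors in (i) and (ii) are random (they involve $\tilde\mu^{N,h}$ and the Brownian increments), so you cannot pull out a deterministic $E$ before applying H\"older; and it is precisely the resulting factor $\bigl(\int g_c(t-s,x-z)^p\,u_s^{i,N,h}(z)\,dz\bigr)^{1/p}$ that, via your identity $g_c^p\propto t^{-d(p-1)/2}g_{c/p}$ and the Aronson bound, produces both the weight $(g_{c/p}(t)\ast u_0)^{1/p}$ and the integrable time singularity $(t-s)^{-1/2-d/(2\bar p)}$ governed by $\bar p>d$.
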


We obtain the same convergence rate as in Theorem \ref{thm.1}, multiplied 
by the integral of a Gaussian density. 
This result implies the
quantification of the weak error for the EM scheme, in the sense that 
it provides a convergence rate for 
$\max_{1 \leq i \leq N}
|\mathbb{E}[\phi(X^{i,N,h}_t)] - \mathbb{E}[{\phi(X_t)}]|$ for a 
certain class of test functions $\phi$ which can even be Dirac masses.
Note that when $u_0$ is
 a Dirac mass in $x_0$, which is not possible in the context of 
 the singular 
 kernels we consider but is possible for $L^q-L^{\rho}$ kernels
 in \cite{jourdain}, we obtain the term $g_c(t, x-x_0)$ in the right-hand side of 
the inequality, 
which is also present in the convergence rate 
of the EM scheme in \cite{jourdain}.

\begin{remark}\label{initial-data}
    In practice, it is necessary to verify 
    that the initial condition satisfies \hyperlink{assumption:A}{\boldmath{$(A_0)$}} 
    and explicitly derive the 
    convergence rate of $$\left\| \left\| \tilde{\mu}_0^{N,h} - u_0
    \right\|_{L^1\cap L^r} 
    \right\|_{L^{m}(\Omega)}.$$ A first natural assumption to facilitate 
    this derivation is that the particles are initially independent and 
    identically distributed according to \(u_0\). Following the 
    computations in \cite{ORT24}, and assuming that \(u_0\)
    has a density $f$ with compact support and in \(L^r\), we have,
     for $m \geq r \geq 2 $, 
        \begin{align}
\mathbb{E} \left[\left\|{\mu}_0^{N,h}
        \ast V^N\right\|_{L^r}^m \right] \nonumber
        &\leq C( \mathbb{E} \left[\left\|{\mu}_0^{N,h}
        \ast V^N - \mathbb{E}[{\mu}_0^{N,h}
        \ast V^N]\right\|_{L^r}^m \right] + 
        \|V^N \ast f\|_{L^r}^m) \\ \nonumber
        & \leq C( \mathbb{E} \left[\left\|{\mu}_0^{N,h}
        \ast V^N - \mathbb{E}[{\mu}_0^{N,h}
        \ast V^N]\right\|_{L^r}^m \right] + 
         \| f\|_{L^r}^m) \nonumber
        \end{align}
        \color{black}
        Since $\int \mu_0^{N,h}\ast V^N(x)dx = 1$ and using
        Jensen's inequality,
        we have
        \begin{align*}&\mathbb{E} \left[\left\|{\mu}_0^{N,h}
        \ast V^N - \mathbb{E}[{\mu}_0^{N,h}
        \ast V^N]\right\|_{L^r}^m \right] \\
        &= \E \left( \int 
        |\mu_0^{N,h}\ast V^N(x) - \E[\mu_0^{N,h}\ast V^N(x)]|^{r-1}
        |\mu_0^{N,h}\ast V^N(x) - \E[\mu_0^{N,h}\ast V^N(x)]|
         dx\right)^{\frac{m}{r}} \\
        & \leq 2 \E \int |\mu_0^{N,h}\ast V^N(x) - 
        \E[\mu_0^{N,h}\ast V^N(x)]|^{\frac{(r-1)m}{r}}
        |\mu_0^{N,h}\ast V^N(x)- 
        \E[\mu_0^{N,h}\ast V^N(x)]| dx \\
        &= C\,
        \mathbb{E}\int_{\mathbb{R}^d} 
        \left|\frac{1}{N}\sum_{i=1}^N
        V^N(x-X_{0}^{i,N,h})-V^N\ast 
        f(x)\right|^ndx,
        \end{align*}
        where $n := 1 + \frac{m}{r}$. 
        Noticing that 
        $\sum_{i=1}^N \left(V^N(x-X_{0}^{i,N,h}) -
         V^N\ast f(x)\right)$
        is a sum of iid centered random variables, 
        and applying Marcinkiewicz-Zygmund moment 
        inequality (see e.g. Lemma 2 in \cite{M38})
        followed by 
        $\left(\sum_{i=1}^N |a_i|^2\right)^{n/2} 
        \leq N^{n/2-1} \sum_{i=1}^N |a_i|^{n}$,
        there exists $C > 0$ such that\color{black}
     \begin{align}
        \int_{\mathbb{R}^d}
       \nonumber
        & \mathbb{E}\left[ \left|\frac{1}{N}\sum_{i=1}^N
        V^N(x-X_{0}^{i,N,h})-V^N\ast f(x)\right|^n\right] dx 
        \leq
        C N^{-\frac{n}{2}} (n-1)!! \int_{\mathbb{R}^d}
         \mathbb{E}\left[ |V^N(x-X_{0}^{1,N,h})
        -V^N\ast f(x)|^n\right] \, dx \\ \nonumber
        &\leq C N^{-\frac{n}{2}} \int_{\mathbb{R}^d}
        \int_{\mathbb{R}^d}  \left|
        \int_{\mathbb{R}^d} (V^N(x-y) - V^N(x-z)) f(z) dz \right|^n f(y) \,dy \, dx \\ \nonumber
        &\leq C N^{-\frac{n}{2}} 
        \int_{(\mathbb{R}^d)^3} |V^N(x-y) - V^N(x-z)|^n f(z) f(y) \, dy \, dz \, dx\\ \nonumber
        & \leq C N^{-\frac{n}{2}}
          \int_{(\mathbb{R}^d)^3} |V^N(x-y)|^n f(z) f(y) \, dy \, dz \, dx
        \\ \label{unif_bound}
        &\leq C N^{-\frac{n}{2} + d \alpha (n-1)},
     \end{align}
      and \eqref{unif_bound} is uniformly bounded in $N$ if 
      $\alpha \leq \frac{1}{2d}$. The same conclusion holds for $1 \leq m < r$ 
      using Hölder's inequality :
      $$ \left\| \left\|{\mu}_0^{N,h}
    \ast V^N  - \mathbb{E}[{\mu}_0^{N,h}
        \ast V^N]\right\|_{L^r}\right\|_{L^m(\Omega)} \leq \left\| \left\|{\mu}_0^{N,h}
    \ast V^N  - \mathbb{E}[{\mu}_0^{N,h}
        \ast V^N]\right\|_{L^r} \right\|_{L^r(\Omega)}.$$
    Inspecting the computations, we see that full independence of the particles is not mandatory. 
It is sufficient to assume that, for some fixed $k \in \mathbb{N}^*$, and for all $N \ge k$, 
the particles can be partitioned into groups of size $k$ (with possibly a last group of 
smaller size) such that the groups are independent.
Then one can decompose the sum into blocks of size $k$, this yields a sum of about $N/k$ independent centered random variables, and the same moment estimate applies up to a multiplicative constant depending on $k$, 
leading to the same rate in $N$. The computations are 
    similar for the 
    convergence rate of $$\left\| \left\| \tilde{\mu}_0^{N,h} - u_0
    \right\|_{L^1\cap L^r} 
    \right\|_{L^{m}(\Omega)},$$ and we refer the reader to Proposition B.1 in \cite{ORT24}.
    Precisely, under the same 
    assumptions, an explicit convergence rate can be computed, which is 
    negligible compared to \(N^{-\rho + \epsilon}\) and 
    \(N^{\frac{d \alpha}{\bar{r}}} h^{\frac{\zeta}{2}}\).
\end{remark}


\section{Proofs of main results}\label{sec3}
In this section, we provide the detailed proofs for our two main results, 
Theorem \ref{thm.1} and Theorem \ref{thm.2}, along with several 
useful preliminary results.

The main idea behind the proofs of Theorem 
\ref{thm.1} and Theorem \ref{thm.2} is an expansion 
around the Gaussian kernel using Itô's formula. For 
Theorem \ref{thm.1}, we subtract the mild formulation of \( u_t \) from 
that of \( \mu_t^{i,N,h} \). We then decompose the resulting error into 
several terms, bound the stochastic convolution term, apply 
Proposition \ref{prop1.5} to control one of the terms, and use Grönwall's 
lemma to finalize the proof.

For Theorem \ref{thm.2}, we use a 
similar expansion around the Gaussian kernel for 
both \( u_t \) and \( u_t^{i,N,h} \). The error is again decomposed 
into several terms. One term is handled using Theorem \ref{thm.1}, another 
term is analyzed in relation to the error induced by the discretization step 
of the EM scheme, and for both terms, Proposition \ref{prop1.3} below is 
used. Finally, Grönwall's lemma is applied to conclude the proof.


\subsection{Preliminary results}

To begin, we present a preliminary lemma 
that is essential in the calculations for the proofs of
 Proposition \ref{prop1.5} and Theorem \ref{thm.1}.
 This is a general lemma which can be applied to a
  large class of stochastic processes $(X^{i,N})_{1 \leq i \leq N}$ that 
  are not necessarily associated with a specific particle 
  system. The only requirements are that these processes 
  are progressively measurable with respect to the natural 
  filtration of the Brownian motions and that certain
   moment bounds are satisfied.
 The proof of this lemma follows directly 
 from the computations presented in Appendix A.2 of \cite{ORT}.

\begin{lemma}\label{lemma}
    Let $T > 0$. 
    Consider $(W^i)_{i \in \mathbb{N}^*}$ a family of 
    independent Brownian motions. For all $N \in \mathbb{N}^*$ and all
    $i \in \{1, \ldots, N\}$, let $(X_s^{i,N})_{s \in [0,T]}$ be a
    progressively measurable process according to the natural filtration of $W^i$.
    Define, for any $t \in [0,T]$ and $x \in \mathbb{R}^d$, the process
     \begin{equation*}
         M_t^N(x) := \frac{1}{N} \sum_{i=1}^N 
         \int_0^t e^{(t-s)\Delta} \nabla V^N(x-X_s^{i,N}) \cdot dW_s^i.
     \end{equation*}
Then, for any $m \geq 1$ and $p \in [2,\infty]$, there exists 
     $C > 0$ such that for any $N \geq 1$,
     $t \in [0,T]$ and $\varepsilon > 0$,
     \begin{equation*}
         \left\|\sup_{s \in [0,t]} \|M_s^N\|_{L^p} \right\|_{L^m(\Omega)} \leq C N^{-\frac{1}{2}(1-
         2 \alpha d (1-\frac{1}{p}))+\varepsilon}.
     \end{equation*}
 
     Moreover, if for any $q > 0$, 
     \begin{equation*}
        \sup_{N \in \mathbb{N}^*} \sup_{i \in
        \{1,...,N\}} \mathbb{E} \left[ \sup_{s \in [0,T]} |X_s^{i,N}|^q \right] < + \infty,
     \end{equation*}
then, for any $m \geq 1$, $p \in [1,2[$ there exists $C > 0$ such that for any $N \in \mathbb{N}^*$,
     $t \in [0,T]$ and $\varepsilon > 0$,
     \begin{equation*}
         \left\|\sup_{s \in [0,t]} \|M_s^N\|_{L^p}\right\|_{L^m(\Omega)} \leq C N^{-\frac{1}{2}(1-
         \alpha d)+\varepsilon},
     \end{equation*}
\end{lemma}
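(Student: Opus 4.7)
The plan is to control the supremum in time by the factorization method of Da Prato--Kwapień--Zabczyk and then to apply BDG carefully in the resulting time integrand, balancing where the derivative is distributed between $\nabla V^N$ and the heat kernel $g_2$.

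Step 1 (factorization). For a parameter $\beta \in (1/m, 1/2)$, I would write, via the semigroup identity
\[
(t-s)^{\beta-1}(\sigma-s)^{-\beta} \quad \text{integrated in } \sigma \in [s,t] \text{ against } e^{(t-\sigma)\Delta} e^{(\sigma-s)\Delta},
\]
the representation
\[
M_t^N = \frac{\sin(\pi \beta)}{\pi} \int_0^t (t-\sigma)^{\beta-1} e^{(t-\sigma)\Delta} Y_\sigma^N \, d\sigma,
\qquad
Y_\sigma^N := \frac{1}{N} \sum_{i=1}^N \int_0^\sigma (\sigma-s)^{-\beta} e^{(\sigma-s)\Delta} \nabla V^N(\cdot - X_s^{i,N}) \cdot dW_s^i.
\]
Using that $e^{(t-\sigma)\Delta}$ is a contraction on $L^p$ and applying Hölder's inequality in the time integral, one obtains
\[
\sup_{t \in [0,T]} \|M_t^N\|_{L^p} \leq C_\beta \left( \int_0^T \|Y_\sigma^N\|_{L^p}^m \, d\sigma \right)^{1/m},
\]
provided $(1-\beta) m/(m-1) < 1$, which fixes the lower bound on $\beta$. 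Taking $L^m(\Omega)$, the whole problem reduces to estimating $\mathbb{E}\|Y_\sigma^N\|_{L^p}^m$ uniformly in $\sigma \in [0,T]$.

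Step 2 (BDG and Minkowski). For each fixed $\sigma$, the process $x \mapsto Y_\sigma^N(x)$ is a sum of stochastic integrals, so the pointwise BDG inequality together with Minkowski's integral inequality (to swap $L^m(\Omega)$ and $L^p(\mathbb{R}^d)$) gives
\[
\bigl(\mathbb{E}\|Y_\sigma^N\|_{L^p}^m\bigr)^{1/m}
\leq C \left( \int_0^\sigma (\sigma-s)^{-2\beta} \, \mathbb{E}\bigl\| G_{\sigma-s}^N(\cdot - X_s^{\cdot,N}) \bigr\|_{L^p,N}^{2}\, ds \right)^{1/2},
\]
where $G_\tau^N := g_2(\tau,\cdot) * \nabla V^N$ and the norm on the right aggregates the empirical sum with the proper $N^{-1}$ normalization. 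By translation invariance of Lebesgue norms, this reduces to controlling $\|G_\tau^N\|_{L^p}$ for $p \geq 2$.

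Step 3 (scaling of the mollified kernel). The crucial estimate is $\|G_\tau^N\|_{L^p}$. Two competing bounds are available: either put the derivative on $V^N$, giving $\|G_\tau^N\|_{L^p} \leq \|\nabla V^N\|_{L^p} \lesssim N^{\alpha(1 + d(1-1/p))}$, or shift it onto the Gaussian via \eqref{eq:heat_kernel}, giving $\|G_\tau^N\|_{L^p} \leq \tau^{-1/2} \|V^N\|_{L^p} \lesssim \tau^{-1/2} N^{d\alpha(1-1/p)}$. The second bound avoids the extra $N^\alpha$ factor and, combined with the integrable singularity $(\sigma-s)^{-2\beta-1}$ for $\beta$ chosen small enough, yields after integration in $s$ the factor $N^{2d\alpha(1-1/p)}/N$, whose square root gives the announced exponent $-\tfrac{1}{2}(1-2\alpha d(1-1/p))$. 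The independent Brownian motions provide the extra $N^{-1/2}$ through the empirical average.

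Step 4 ($p \in [1,2)$). Here Young's inequality on $\|G_\tau^N\|_{L^p}$ alone is too weak: the bounds above make sense only when the reference measure carries enough integrability. I would instead estimate $\|Y_\sigma^N\|_{L^p}$ by first restricting to a large ball $B_R$, then using Hölder in space against $L^2$ and the Step 3 estimate with $p = 2$, and finally controlling the tail $|x| > R$ by the moment assumption $\sup_{i,N}\mathbb{E}[\sup_{s \leq T}|X_s^{i,N}|^q] < \infty$, which ensures the integrand of $Y_\sigma^N$ is (stochastically) localized around the particles. Optimizing in $R = R(N)$, the $L^2$ exponent $-\tfrac{1}{2}(1-\alpha d)$ propagates to $L^p$ for $p \in [1,2)$, giving the announced rate.

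The main obstacle will be Step 3 — more precisely, avoiding the spurious $N^\alpha$ that arises from differentiating $V^N$. The trick of shifting the derivative to the heat kernel introduces the factor $(\sigma-s)^{-1/2}$, which must be weighed against $(\sigma-s)^{-2\beta}$ coming from factorization and the constraint $\beta > 1/m$ from Step 1. Choosing $\beta$ slightly above $1/m$ produces the $\varepsilon$ loss in the exponent, and the freedom in $m$ explains why $\varepsilon$ can be taken arbitrarily small.
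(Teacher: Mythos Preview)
Your overall architecture---factorization of the stochastic convolution, BDG pointwise followed by Minkowski, then a scaling estimate for $G_\tau^N = g_2(\tau,\cdot)\ast\nabla V^N$, and finally a localization argument for $p<2$---is exactly the route taken in the reference the paper cites (Appendix~A.2 of \cite{ORT}). Steps~1, 2 and~4 are fine as sketched.

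Step~3, however, contains a genuine error. You write that the second bound $\|G_\tau^N\|_{L^p}\lesssim \tau^{-1/2}N^{d\alpha(1-1/p)}$ combined with ``the integrable singularity $(\sigma-s)^{-2\beta-1}$ for $\beta$ chosen small enough'' yields the result. But $(\sigma-s)^{-2\beta-1}$ is \emph{never} integrable near $s=\sigma$ for $\beta>0$, and you are forced to take $\beta>1/m>0$ by Step~1. Using only the second bound therefore gives a divergent time integral. You cannot discard the first bound $\|G_\tau^N\|_{L^p}\le\|\nabla V^N\|_{L^p}\lesssim N^{\alpha(1+d(1-1/p))}$; you must use \emph{both}. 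The correct move is to take the minimum of the two bounds and split the integral at the crossover scale $\tau_0\sim N^{-2\alpha}$ where they agree:
\[
\int_0^{\tau_0}\tau^{-2\beta}\,N^{2\alpha}\,d\tau + \int_{\tau_0}^{\sigma}\tau^{-2\beta-1}\,d\tau \;\sim\; N^{4\alpha\beta},
\]
which, after multiplying by $N^{2d\alpha(1-1/p)}/N$ and taking the square root, gives $N^{-\frac12(1-2\alpha d(1-1/p))+2\alpha\beta}$. The residual $N^{2\alpha\beta}$ is the true source of the $\varepsilon$-loss; it is made small by taking $\beta$ small, which in turn (via $\beta>1/m$) forces $m$ large, and one recovers general $m\ge1$ by H\"older in $\Omega$. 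Your closing remark about the origin of $\varepsilon$ is thus also slightly off: it is not the gap $\beta-1/m$ in the factorization step that produces the loss, but rather this $N^{4\alpha\beta}$ factor arising from the short-time part of the split.
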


Next, in Theorem \ref{thm.1}, we require a uniform estimate in 
$N$ for the mollified empirical measure $\tilde{\mu}^{N,h}$.
The following proposition provides such an estimate.

\begin{proposition}\label{prop1.5}
    Under the Assumptions \hyperlink{assumption:A}{\boldmath{$(A_0)$}} and 
    \hyperlink{assumption:Aalpha}{\boldmath{$(A_\alpha)$}}, 
    for any $q \geq 1$, $p \geq 1$ and $T>0$, 
    \[ \sup_{N \in \mathbb{N}} \mathbb{E} 
    \left[ \sup_{t \in [0,T]} \left\| \tilde{\mu}_t^{N,h}
     \right\|_{L^p}^q \right] < + \infty. \]
    \end{proposition}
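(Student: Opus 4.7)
The plan is to derive a mild stochastic representation for $\tilde{\mu}_t^{N,h}$ via It\^o's formula, turn it into a singular Volterra inequality for $\|\tilde{\mu}_t^{N,h}\|_{L^p}$, close it through a Henry-type Gronwall lemma, and then take $L^q(\Omega)$-moments.

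First, I would apply It\^o's formula to $y\mapsto V^N(x-y)$ composed with each $X_t^{i,N,h}$, sum over $i\in\{1,\dots,N\}$, divide by $N$, and use the variation-of-constants formula for the heat semigroup. Writing $b_s^i:=F_A\!\bigl(\sum_{k=1}^N K\ast V^N(X_{\tau_s^h}^{i,N,h}-X_{\tau_s^h}^{k,N,h})\bigr)$ (uniformly bounded thanks to $F_A$) and $G_s^N(x):=\tfrac{1}{N}\sum_{i=1}^N V^N(x-X_s^{i,N,h})\,b_s^i$, the resulting mild representation reads
\[
\tilde{\mu}_t^{N,h}(x)=e^{t\Delta}\tilde{\mu}_0^{N,h}(x)-\int_0^t\nabla\!\cdot\!\bigl(e^{(t-s)\Delta}G_s^N\bigr)(x)\,ds-\sqrt{2}\,M_t^N(x),
\]
where $M_t^N$ is exactly the stochastic convolution appearing in Lemma \ref{lemma}.

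Taking $L^p$ norms, using contractivity of $e^{t\Delta}$ on $L^p$, the smoothing estimate $\|\nabla e^{\tau\Delta}f\|_{L^p}\leq C\tau^{-1/2}\|f\|_{L^p}$ derived from \eqref{eq:heat_kernel}, and the pointwise bound $|G_s^N|\leq\|F_A\|_\infty\,\tilde{\mu}_s^{N,h}$ (valid because $F_A$ is bounded and $\mu_s^{N,h}$ is a probability measure), I obtain the Volterra inequality
\[
\|\tilde{\mu}_t^{N,h}\|_{L^p}\leq\|\tilde{\mu}_0^{N,h}\|_{L^p}+C\!\int_0^t(t-s)^{-1/2}\|\tilde{\mu}_s^{N,h}\|_{L^p}\,ds+\sqrt{2}\sup_{s\leq t}\|M_s^N\|_{L^p}.
\]
A Henry-type singular Gronwall lemma then yields $\sup_{t\in[0,T]}\|\tilde{\mu}_t^{N,h}\|_{L^p}\leq C_T\bigl(\|\tilde{\mu}_0^{N,h}\|_{L^p}+\sup_{t\in[0,T]}\|M_t^N\|_{L^p}\bigr)$. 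Raising both sides to the $q$-th power and taking expectations, the initial-data term is controlled uniformly in $N$ by combining \hyperlink{assumption:A}{\boldmath{$(A_0)$}} with interpolation between $\|\tilde{\mu}_0^{N,h}\|_{L^1}=1$ and the $L^r$ bound (thereby covering $p\in[1,r]$), while the martingale term is controlled by Lemma \ref{lemma}, which gives a uniform-in-$N$ bound on this range of $p$ precisely under \hyperlink{assumption:Aalpha}{\boldmath{$(A_\alpha)$}}.

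I expect the main obstacle to be the sharp matching between the range of $p$ and the constraint on $\alpha$: Lemma \ref{lemma}'s first bound is useful only when $1-2\alpha d(1-1/p)>0$ (case $p\geq 2$), and its second bound only when $\alpha d<1$ (case $p<2$); both conditions hold for $p\in[1,r]$ precisely under \hyperlink{assumption:Aalpha}{\boldmath{$(A_\alpha)$}}. For the $p<2$ case one must also verify that $\sup_{N,i}\mathbb{E}\sup_{t\leq T}|X_t^{i,N,h}|^q<\infty$, which follows from \hyperlink{assumption:A}{\boldmath{$(A_0)$}} and the boundedness of $F_A$. Once these points are in place, the Gronwall closure and the moment-taking steps are routine.
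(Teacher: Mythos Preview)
Your proposal is correct and follows essentially the same approach as the paper: derive the mild formulation of $\tilde{\mu}_t^{N,h}$ via It\^o, use boundedness of $F_A$ together with the heat-semigroup smoothing estimate \eqref{eq:heat_kernel} to obtain a singular Volterra inequality, invoke Lemma \ref{lemma} for the stochastic convolution $M_t^N$, and close with a Henry-type Gronwall argument before taking moments. Your explicit identification of the range $p\in[1,r]$ on which \hyperlink{assumption:Aalpha}{\boldmath{$(A_\alpha)$}} makes the martingale bound uniform in $N$ (and on which \hyperlink{assumption:A}{\boldmath{$(A_0)$}} plus interpolation controls the initial data) is in fact more precise than the paper's own sketch, which writes ``any $p\geq 1$'' without isolating this constraint.
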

\begin{proof}
The proof follows similar arguments as the proof of Proposition A.12 
in \cite{ORT} and relies on a mild formulation of the 
mollified empirical measure $\tilde{\mu}^{N,h}$
of the EM discretized system of particles \eqref{eds_euler}.
The mollified empirical measure writes as
$$ \tilde{\mu}^{N,h}_t : x \in \mathbb{R}^d \mapsto 
\frac{1}{N} \sum_{i=1}^N V^N(x-X_t^{i,N,h}).$$

Now fix $x \in \mathbb{R}^d$ and $1 \leq i \leq N$. Applying Itô's 
formula to $s \mapsto g_2(t-s,\cdot) \ast  V^N(x-X_s^{i,N,h})$ for fixed $t$
then summing over $1\leq i \leq N$ leads to
the following equation: 
\begin{align}\label{mild_form_euler}
    \tilde{\mu}_t^{N,h}(x) &= g_2(t,\cdot) \ast \tilde{\mu}_0^{N,h}(x) -
     \frac{1}{N}\sum_{i=1}^N 
    \int_0^t \nabla \cdot g_2(t-s,\cdot) \ast  V^N(x-X_s^{i,N,h})
    F_A(\tilde{\mu}_{\tau_s^h}^{N,h} \ast K(X_{\tau_s^h}^{i,N,h}))
   \,ds \notag \\
    &\quad - \frac{\sqrt{2}}{N}\sum_{i=1}^N 
    \int_0^t g_2(t-s,\cdot) \ast  \nabla V^N(x-X_s^{i,N,h}) \cdot dW_s^{i}.
\end{align}
By triangular inequality, we have $\forall p \geq 1$,
\begin{align*}
    \|\tilde{\mu}_t^{N,h} \|_{L^p} &\leq
    \|g_2(t,\cdot) \ast \tilde{\mu}_0^{N,h}\|_{L^p} \\ 
    &\quad + \left\|\int_0^t
    \nabla \cdot g_2(t-s,\cdot) \ast  \left( \frac{1}{N} \sum_{i=1}^N
    F_A(\tilde{\mu}_{\tau_s^h}^{N,h} \ast K(X_{\tau_s^h}^{i,N,h})
    )V^N(x-X_s^{i,N,h})\right) ds \right\|_{L^p} \\
    &\quad + \left\| \frac{\sqrt{2}}{N} 
    \sum_{i=1}^N \int_0^t g_2(t-s,\cdot) \ast  
    \nabla V^N(x-X_s^{i,N,h}) \cdot 
    dW_s^i \right\|_{L^p}.
\end{align*}
Then using that $F_A$ is bounded and 
inequality \eqref{eq:heat_kernel}, we obtain
\begin{align}\label{Gronw}
    \|\tilde{\mu}_t^{N,h} \|_{L^p} \leq C \left(
    \|\tilde{\mu}_0^{N,h}\|_{L^p} + \int_0^t
    \frac{1}{(t-s)^{1/2}} \|\tilde{\mu}_s^{N,h}\|_{L^p}\,ds+ \sup_{t \in [0,T]} 
    \|M_t^{N,h}\|_{L^p}\right),
\end{align}
where we have set 
$$M_t^{N,h} = \frac{1}{N} \sum_{i=1}^N
    \int_0^t g_2(t-s,\cdot) \ast \nabla V^N(x-X_s^{i,N,h}) 
    \cdot dW_s^i,\quad \forall t \in [0,T].$$
Since $(X_s^{i,N})_{s \in [0,T]}$ is a progressively measurable process
according to the natural filtration of $W^i$
and also a diffusion with bounded coefficients uniformly in $N$, 
hence verifying for any $q>0$,
\[\sup_{N \in \mathbb{N}^*} \sup_{i \in
\{1,...,N\}} \mathbb{E}[\sup_{t \in [0,T]} 
|X_t^{i,N,h}|^q] < + \infty,\] we
obtain directly from Lemma \ref{lemma}
 the following 
bound for any $\varepsilon 
> 0$, any $p \geq 1$, any $ m \geq 1$, there exists $ C > 0$,
\begin{equation}\label{martingale-bounds}
 \left\|\sup_{t \in [0,T]} \left\| 
M_t^{N,h} \right\|_{L^p} \right\|_{L^m(\Omega)}
 \leq 
C N^{-\frac{1}{2}(1-\alpha(d+\chi_p))+\varepsilon},
\end{equation}
where $\chi_p = \max(0,d(1-\frac{2}{p}))$. Using 
Assumption \hyperlink{assumption:Aalpha}{\boldmath{$(A_\alpha)$}},
we obtain a decreasing bound in $N$ in \eqref{martingale-bounds}.
Grönwall's lemma in \eqref{Gronw} and the derived moments of
 $\|M^{N,h}\|_{T,L^p}$ directly provide
 the desired uniform bound in $N$.
\end{proof}

Finally, the proof of Theorem 
\ref{thm.2} relies on the following proposition.

\begin{proposition} \label{prop1.3}
    Let $T>0$, $N\in \mathbb{N}^{\ast}$ and $h>0$. Consider 
    the previously defined EM scheme \eqref{eds_euler} 
    and assume that $\forall i \in \{1,...,N\},  
    \mathcal{L}(X_0^{i,N,h}) = u_0$.
    Then  for any $i \in \{1,...,N\}$, for any $t \in [0,T]$,
     $X_t^{i,N,h}$ admits 
    a density 
    $u_t^{i,N,h}$ such that for
    almost all $ x \in \mathbb{R}^d$,
    \begin{equation}\label{mild_em}
        u_t^{i,N,h}(x)=
        \int_{\mathbb{R}^d}
         g_2(t,x-y) u_0(y) \, dy-
        \int_{0}^t\mathbb{E}[F_A(\tilde{\mu}_{\tau_s^h}^{N,h}\ast 
        K(X_{\tau_s^h}^{i,N,h}))\cdot\nabla g_2(t-s,x-X_s^{i,N,h})]\,ds.
    \end{equation}
    Moreover, for any $c>2$, there exists
     $C > 0 $ independent of $h$ and $N$,
    such that for
    $t \in (0,T]$ and almost all $x\in \mathbb{R}^d$,

    \begin{equation*}\label{density_gaussian_bound}
        u_t^{i,N,h}(x) \le C 
         \int_{\mathbb{R}^d}
        g_c(t,x-y)u_0(y)\,dy.
    \end{equation*}
   
\end{proposition}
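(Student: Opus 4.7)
My plan is to obtain \eqref{mild_em} by a duality argument applied to the backward heat semigroup, and then to close the Gaussian pointwise upper bound via a singular Volterra iteration that exploits the semigroup property of the Gaussian kernels. Existence of a density $u_t^{i,N,h}$ on $(0,T]$ is immediate from the uniform boundedness of the drift in \eqref{eds_euler} by $A\sqrt{d}$: conditioning on $\mathcal{F}_{\tau_t^h}$ freezes the drift over $[\tau_t^h,t]$ and makes the conditional law of $X_t^{i,N,h}$ Gaussian, so the unconditional law is absolutely continuous.

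To obtain \eqref{mild_em}, fix $t \in (0,T]$ and $\psi \in C_c^\infty(\mathbb{R}^d)$, and set $\Phi(s,y) := (g_2(t-s,\cdot) \ast \psi)(y)$ for $s \in [0,t]$. Since $\Phi$ is smooth and satisfies the backward heat equation $\partial_s \Phi + \Delta_y \Phi = 0$, applying Itô's formula to $s \mapsto \Phi(s, X_s^{i,N,h})$ causes the $\partial_s\Phi$ and $\Delta_y\Phi$ contributions to cancel, leaving
\[
\psi(X_t^{i,N,h}) = (g_2(t,\cdot) \ast \psi)(X_0^{i,N,h}) + \int_0^t F_A\bigl(\tilde{\mu}_{\tau_s^h}^{N,h} \ast K(X_{\tau_s^h}^{i,N,h})\bigr) \cdot (\nabla g_2(t-s,\cdot) \ast \psi)(X_s^{i,N,h})\, ds + M_t,
\]
where $M_t$ is a centered martingale. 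Taking expectations (the martingale vanishes), using $\mathcal{L}(X_0^{i,N,h}) = u_0$, applying Fubini, and invoking the parity $\nabla g_2(t-s, y-z) = -\nabla g_2(t-s, z-y)$, one identifies $u_t^{i,N,h}$ by testing against $\psi$; the arbitrariness of $\psi$ yields \eqref{mild_em} for almost every $x$.

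For the pointwise bound, insert $|F_A| \leq A\sqrt{d}$, the gradient estimate $|\nabla g_2(t-s, z)| \leq C(t-s)^{-1/2} g_c(t-s, z)$ from \eqref{eq:gaussian_density}, and the elementary comparison $g_2 \leq C\, g_c$ valid for $c > 2$, into \eqref{mild_em}. With $H_t(x) := (g_c(t,\cdot) \ast u_0)(x)$, and noting that $\mathbb{E}[g_c(t-s, x-X_s^{i,N,h})] = (g_c(t-s,\cdot) \ast u_s^{i,N,h})(x)$, one obtains the Volterra inequality
\[
u_t^{i,N,h}(x) \leq C\, H_t(x) + C \int_0^t \frac{(g_c(t-s,\cdot) \ast u_s^{i,N,h})(x)}{\sqrt{t-s}}\, ds.
\]
The key structural fact is the Gaussian semigroup identity $g_c(t-s,\cdot) \ast H_s = H_t$: substituting any pointwise bound of the form $u_s^{i,N,h}(y) \leq m_s H_s(y)$ on the right-hand side reproduces a multiple of $H_t(x)$. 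Iterating this substitution generates a series whose coefficients are Beta integrals $\int_0^t s^{a-1}(t-s)^{-1/2}ds$ summing to a Mittag-Leffler-type function of $C\sqrt{t}$, which is finite on $[0,T]$; equivalently, a standard singular Grönwall (Henry) lemma applies and yields $u_t^{i,N,h}(x) \leq C_T H_t(x)$ with $C_T$ independent of $N$ and $h$. The main obstacle is precisely the singularity $(t-s)^{-1/2}$: its integrability combined with the convolutional semigroup structure of $g_c$ is what makes the iteration close. A minor preliminary step is to establish a crude a priori bound on $u_t^{i,N,h}$ (for instance via the conditioning on $\mathcal{F}_{\tau_t^h}$ above) so that the Volterra iteration is well-posed before the sharp constant $C_T$ emerges.
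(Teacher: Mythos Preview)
Your proof is correct and follows essentially the same route as the paper: It\^o's formula against the backward heat semigroup yields the mild formulation, and then a Volterra/Gr\"onwall argument exploiting the Gaussian semigroup identity $g_c(t-s,\cdot)\ast H_s = H_t$ closes the pointwise bound. The only difference is that the paper carries out your ``minor preliminary step'' explicitly, proving by induction over the time grid (i.e., iterating your one-step conditioning on $\mathcal{F}_{\tau_t^h}$ through all $k$ steps) a crude $h$-dependent estimate $u_t^{i,N,h}\le c^{\frac{d}{2}\lceil t/h\rceil}e^{C\lceil t/h\rceil}\,g_{2c}(t,\cdot)\ast u_0$, and uses it both to justify the Fubini swap that turns the weak identity into \eqref{mild_em} and to ensure the ratio $u_s^{i,N,h}/H_s$ is finite before the Gr\"onwall step.
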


\begin{proof}
    
The proof gets its inspiration
from the proof of Proposition
2.1 in \cite{jourdain}. Fix a time horizon $T>0$, a discretization 
step $h >0$
and consider  the EM scheme \eqref{eds_euler}. Let $i \in \{1,...,N\}$. 
In order to obtain \eqref{mild_em}, we apply Itô's formula to \( v(s, X_s^{i,N,h}) \),
where 
\[
v(s, y) = \mathbf{1}_{\{s < t\}} g_2(t-s, \cdot) \ast \phi(y) + \mathbf{1}_{\{s = t\}} \phi(y),
\] 
with \( \phi \)  a \( C^2 \) function with compact support. Hence, we obtain
\begin{equation}\label{eq:ito}
\phi(X_t^{i,N,h}) = 
v(0,X_0^{i,N,h} ) + \int_{0}^{t} \nabla v(s, X_s^{i,N,h}) 
\cdot dW_s^i + \int_{0}^{t} \nabla v(s, X_s^{i,N,h}) 
\cdot F_A(\tilde{\mu}_{\tau_s^h}^{N,h}\ast 
K(X_{\tau_s^h}^{i,N,h}))\,ds.
\end{equation}
Now take the expectation in \eqref{eq:ito}.
Since \(\nabla v\) and \(F_A\) are bounded, it remains to show that  
\(
\mathbb{E} \left[ |\nabla g_2(t - s, X_{s} - y)| \right]
\) 
is bounded by an integrable function in order to apply Fubini’s Theorem
and obtain
\begin{align}
    \int_{\mathbb{R}^d} \phi(y) u_t^{i,N,h}(y) \, dy &= \int_{\mathbb{R}^d} \phi(y) 
    \int_{\mathbb{R}^d} 
        g_2(t,y-x) u_0(x) \, dx \, dy \nonumber \\
        &+ \int_{\mathbb{R}^d} \phi(y) 
        \int_0^t \mathbb{E}[F_A(\tilde{\mu}_{\tau_s^h}^{N,h} \ast K (X_{\tau_s^h}^{i,N,h})) 
        \cdot \nabla g_2(t-s, X_s^{i,N,h} - y)] \, ds. \label{fubini}
    \end{align}
Then since \(\phi\) is arbitrary and \(g_2\) is even in its spatial variable, 
we deduce that $\forall t \in (0,T]$, for almost all $y \in \mathbb{R}^d$,
\begin{equation}\label{EMDuhamel} 
    u_t^{i,N,h}(y)= \int_{\mathbb{R}^d} 
    g_2(t,y-x)u_0(x)\,dx -
     \int_0^t \mathbb{E}[F_A(\tilde{\mu}_{\tau_s^h}^{N,h} \ast K (X_{\tau_s^h}^{i,N,h}))
      \cdot \nabla_y g_2(t-s,y-X_s^{i,N,h})]\, ds.
\end{equation}

The following part is dedicated to the proof of the bound on \(
\mathbb{E} \left[ |\nabla g_2(t - s, X_{s} - y)| \right]
\) necessary for Fubini's Theorem in \eqref{fubini}.

Fix $c > 1$. Let us prove by induction on $k \in \{0,...,\floor{\frac{T}{h}}\}$ that 
for any $t \in (t_k,\max(t_{k+1},T)]$, where $t_k:= kh$,
\begin{equation}\label{bound}
    u_t^{i,N,h}(y) \le 
    c^{\frac{d}{2}\lceil \frac{t}{h} \rceil} \exp\left(\frac{
        \|F_A\|^2_{L^{\infty}}h \lceil 
        \frac{t}{h} \rceil }{4(c-1)}\right)
     \int_{\mathbb{R}^d}
    g_{2c}(t,y-x)u_0(x)\,dx.
\end{equation} 
For $k=0$, we have that for $t \in [0,h]$, 
$X_t^{i,N,h}= X_0^{i,N,h} + F_A(K \ast \tilde{\mu}_{0}^N(X_{0}^{i,N,h}))t + \sqrt{2} W_t^{i}$, hence
\[ u_t^{i,N,h}(y)= \mathbb{E}\big[g_2\big(t, y - X_0^{i,N,h} - 
F_A(K \ast \tilde{\mu}_{0}^N(X_{0}^{i,N,h}))t\big)\big]. \] 
Using the inequality: $\forall c> 1, \forall 
x,y,z \in \mathbb{R}^d,$\color{black}
$$|z-x-y|^2 = |z-x|^2 + |y|^2 - 2(z-x)y
\geq \frac{1}{c}|z-x|^2 - 
\frac{1}{c-1}|y|^2,$$
(the lower bound being a direct consequence of Young's inequality $2a \cdot b \leq
\varepsilon |a|^2 + \frac{1}{\varepsilon} |b|^2$ with $\varepsilon =
1-\frac{1}{c}$) and knowing that $F_A$ is bounded, we have $\forall c > 1$, 
\begin{align*}
 u_t^{i,N,h}(y)&\leq c^{\frac{d}{2}} \E \left[
 \exp\left(\frac{\left|F_A(K \ast \tilde{\mu}_{0}^N(X_{0}^{i,N,h})) t\right|^2}{4(c-1)t}\right) 
 g_{2c}(t, y - X_0^{i,N,h})\right] \\
 &\leq c^{\frac{d}{2}}
  \exp\left(\frac{\|F_A\|^2_{L^{\infty}}h}{4(c-1)}\right)
 \mathbb{E}\big[g_{2c}\big(t, y - X_0^{i,N,h}\big)\big].
\end{align*} \color{black} 
Now take $ k \in \{1,...,\floor{\frac{T}{h}}\}$ and assume that the result holds for $k-1$. 
Then, we have $\forall t \in (t_k,\max(t_{k+1},T)]$, 
$X_t^{i,N,h}= X_{t_k}^{i,N,h} + F_A(K \ast \tilde{\mu}_{t_k}^N(X_{t_k}^{i,N,h}))(t-t_k)
 + \sqrt{2} (W_t^{i}-W_{t_k}^{i})$, 
hence 
\begin{align*}
    u_t^{i,N,h}(y) 
    & = \mathbb{E}\big[g_2\big(t-t_k, y - X_{t_k}^{i,N,h} -
    F_A(K \ast \tilde{\mu}_{t_k}^N(X_{t_k}^{i,N,h}))(t-t_k)\big)\big]\\
    &\leq c^{\frac{d}{2}} \exp\left(\frac{\|F_A\|_{L^{\infty}}^2h}{4(c-1)}\right)
    \mathbb{E}\big[g_{2c}\big(t-t_k, y - X_{t_k}^{i,N,h}\big)\big]\\
    & = c^{\frac{d}{2}} \exp\left(\frac{
        \|F_A\|^2_{L^{\infty}}h}{4(c-1)}\right)
     \int_{\mathbb{R}^d} g_{2c}\big(t-t_k, y - x\big) u_{t_k}(x) \, dx \\
    & \leq c^{\frac{d}{2}\lceil \frac{t}{h} \rceil} \exp\left(\frac{
        \|F_A\|^2_{L^{\infty}}h\lceil 
    \frac{t}{h} \rceil}{4(c-1)}\right)
    \int_{\mathbb{R}^d} g_{2c}\big(t-t_k, y - x\big) 
    \int_{\mathbb{R}^{d}} g_{2c}\big(t_k, x - z\big) u_0(z) \, dz \, dx \\
    & = c^{\frac{d}{2}\lceil \frac{t}{h} \rceil} \exp\left(\frac{
        \|F_A\|^2_{L^{\infty}}h\lceil 
    \frac{t}{h} \rceil}{4(c-1)}\right)
     \int_{\mathbb{R}^d} g_{2c}\big(t, y - x\big) u_0(x) \, dx.
\end{align*} 
This allows to obtain \eqref{bound}, which was a necessary bound in order 
to obtain \eqref{EMDuhamel}.\\
Finally, let
$c>1$,  using the Duhamel formula \eqref{EMDuhamel} and setting $f(t) : = \sup_{y \in \mathbb{R}^d}
\frac{u_t^{i,N,h}(y)}{\int_{\mathbb{R}^d}
g_{2c}(t,y-x)u_0(x)\,dx}$, we have
\begin{align*}
    &\frac{u_t^{i,N,h}(y)}{\int_{\mathbb{R}^d}
g_{2c}(t,y-x)u_0(x)\,dx} 
    \leq C + C \int_0^t \frac{1}{(t-s)^{1/2}} \frac{1}{\int_{\mathbb{R}^d} g_{2c}(t,y-x)u_0(x)\,dx}
    \mathbb{E}\left[ g_{2c}(t-s,y-X_s^{i,N,h}) \right] \, ds \\
    & \quad = C + C \int_0^t \frac{1}{(t-s)^{1/2}} \frac{1}{\int_{\mathbb{R}^d} g_{2c}(t,y-x)u_0(x)\,dx}
    \int_{\mathbb{R}^d} g_{2c}(t-s,y-z) u_s^{i,N,h}(z) \, dz \, ds \\
    &\quad  \leq C + C \int_0^t \frac{1}{(t-s)^{1/2}} f(s) 
    \frac{1}{\int_{\mathbb{R}^d} g_{2c}(t,y-x)u_0(x)\,dx} \int_{\mathbb{R}^d} 
    \int_{\mathbb{R}^d} g_c(t-s,y-z)g_{2c}(s,z-x) u_0(x)\, dx\, dz \, ds\\
    & \quad = C + C \int_0^t \frac{1}{(t-s)^{1/2}} f(s) \, ds.
\end{align*}
Hence, $$f(t) \leq C + C \int_0^t \frac{1}{(t-s)^{1/2}} f(s) \, ds.$$
Then, using Grönwall's lemma in the convolution form, see for example Lemma 
7.1.1 in 
\cite{H81},
we obtain for a.e. $y \in \mathbb{R}^d$,
\begin{equation*}
    u_t^{i,N,h}(y) \le C 
     \int_{\mathbb{R}^d}
    g_{2c}(t,y-x)u_0(x)\, dx,
\end{equation*}
where $C$ does not depend on $h$ and on $N$.
\end{proof}


\subsection{Proof of Theorem 
\ref{thm.1}\label{secthm1}}

Consider the mild formulation \eqref{mild_form_euler} of the 
mollified empirical measure $\tilde{\mu}^{N,h}$:
\begin{align}
\tilde{\mu}_t^{N,h}(x) &= g_2(t,\cdot) \ast \tilde{\mu}_0^{N,h}(x) -
     \frac{1}{N}\sum_{i=1}^N 
    \int_0^t \nabla \cdot g_2(t-s,\cdot) \ast  V^N(x-X_s^{i,N,h})
    F_A(\tilde{\mu}_{\tau_s^h}^{N,h} \ast K(X_{\tau_s^h}^{i,N,h}))
   \,ds \notag \\
    &\quad - \frac{\sqrt{2}}{N}\sum_{i=1}^N 
    \int_0^t g_2(t-s,\cdot) \ast  \nabla V^N(x-X_s^{i,N,h}) \cdot dW_s^{i}.
\end{align}
\textcolor{black}{Recalling Assumption 
\hyperlink{assumption:A-cut-off}{\boldmath{$(A_{F_A})$}}
} (i.e. $A \geq 
\|K \ast u\|_{T,L^{\infty}}$), $\forall s \in [0,T], \, 
F_A(K \ast u_s)= K \ast u_s$. Since $u$ is solution of \eqref{PDE} in 
the sense of Definition \ref{def}, it
satisfies 
the following mild formulation:
\begin{equation*}
    u_t = g_2(t,\cdot) \ast u_0 - \int_0^t \nabla \cdot
    g_2(t-s,\cdot) \ast  (u_s F_A(K \ast u_s))ds.
\end{equation*}
\\
We then subtract and add
the term $$\int_0^t \nabla \cdot g_2(t-s,\cdot) \ast  
\langle \mu_s^{N,h}, V^N(x - \cdot) F_A(K \ast \tilde{\mu}_s^{N,h}(x)) 
\rangle ds = \int_0^t \nabla \cdot g_2(t-s,\cdot) \ast  
(\tilde{\mu}_s^{N,h} F_A(K \ast \tilde{\mu}_s^{N,h}))(x) ds$$ in 
equation \eqref{mild_form_euler} and notice that 
\[ \frac{1}{N} \sum_{i=1}^N \int_0^t 
\nabla g_2(t-s,\cdot) \ast 
V^N(x-X_s^{i,N,h}) \cdot F_A(\tilde{\mu}_s^{N,h} 
\ast K(X_s^{i,N,h}))\,ds = \int_0^t  \nabla \cdot g_2(t-s,\cdot) \ast 
\langle \mu_s^{N,h}, V^N(x-\cdot) 
 F_A(\tilde{\mu}_s^{N,h} \ast K(\cdot)) \rangle \,ds .\]
Hence we obtain the following decomposition
\begin{align*}
    \tilde{\mu}_t^{N,h}(x) - u_t(x) &= 
    g_2(t,\cdot) \ast (\tilde{\mu}_0^{N,h}(x) - u_0(x))
    + \int_0^t \nabla \cdot g_2(t-s,\cdot) \ast 
     ( u_s F_A(K \ast u_s)
        -\tilde{\mu}_s^{N,h} F_A(K \ast \tilde{\mu}_s^{N,h}))(x)ds \\
    &+ E_t^{N,h}(x) - \sqrt{2} M_t^{N,h}(x) + H_t^{N,h}(x),
\end{align*}
where we denote
\begin{equation*}
    E_t^{N,h}(x) = \int_0^t \nabla \cdot g_2(t-s,\cdot) \ast  
    \langle \mu_s^{N,h}, V^N(x - \cdot) \left(F_A(K \ast \tilde{\mu}_s^{N,h}(x))
    - F_A(K \ast \tilde{\mu}_s^{N,h}(\cdot))\right) \rangle ds,
\end{equation*}

\begin{equation*}
    M_t^{N,h}(x) = \frac{1}{N}
    \sum_{i=1}^N \int_0^t g_2(t-s,\cdot) \ast  \nabla
    V^N(x-X_s^{i,N,h}) \cdot dW_s^{i},
\end{equation*}
\begin{equation*}
    H_t^{N,h}(x) = \frac{1}{N} \sum_{i=1}^N \int_0^t
    \nabla \cdot g_2(t-s,\cdot) \ast  V^N(x-X_s^{i,N,h})
    (F_A(K \ast \tilde{\mu}_s^{N,h}(X_s^{i,N,h}))
        -F_A(K \ast \tilde{\mu}_{\tau_s^h}^{N,h}(X_{\tau_s^h}^{i,N,h})))ds.
\end{equation*}
The main difference with the proof in \cite{ORT} is the 
presence of the term $H_t^{N,h}$ which represents the error induced by the 
discretization step of the EM scheme.

By the triangular inequality, we have
\begin{equation}\label{triang}
    \begin{aligned}
        &\| (\tilde{\mu}_s^{N,h} F_A(K \ast \tilde{\mu}_s^{N,h}) - u_s 
    F_A(K \ast u_s)) \|_{L^1 \cap L^r} \\
        &\leq \| u_s (F_A(K \ast \tilde{\mu}_s^{N,h}) - F_A(K \ast u_s)) 
    \|_{L^1 \cap L^r} + \| (\tilde{\mu}_s^{N,h} - 
    u_s) F_A(K \ast \tilde{\mu}_s^{N,h}) \|_{L^1 \cap L^r}.
    \end{aligned}
\end{equation}
Using \eqref{eq:heat_kernel},\eqref{triang}, the Lipschitz property of 
$F_A$ on one term, the boundness of $F_A$ on the other,
the fact that $\|K \ast f\|_{L^{\infty}} \leq 
C \|f\|_{L^1 \cap L^r}$  and that $u_t \in 
L^1 \cap L^r, \forall t \in [0,T]$ (see Definition \ref{def}), we obtain
\begin{align*}
    &\bigg\| \int_0^t \nabla \cdot g_2(t-s,\cdot) \ast 
     (\tilde{\mu}_s^{N,h} F_A(K \ast \tilde{\mu}_s^{N,h})
    - u_s F_A(K \ast u_s))\,ds \bigg\|_{L^1 \cap L^r} \\
    & \leq C \int_0^t \frac{1}{(t-s)^{1/2}}
    \| u_s (F_A(K \ast \tilde{\mu}_s^{N,h}) - F_A(K \ast u_s)) 
    \|_{L^1 \cap L^r} \,ds
     + C \int_0^t \frac{1}{(t-s)^{1/2}}  \| (\tilde{\mu}_s^{N,h} - 
    u_s) F_A(K \ast \tilde{\mu}_s^{N,h}) \|_{L^1 \cap L^r}
    \,ds
     \\
    & \leq C \int_0^t \frac{1}{(t-s)^{1/2}}
    \| u_s \|_{L^1 \cap L^r} \|F_A\|_{Lip}
    \|K \ast(\tilde{\mu}_s^{N,h} - u_s)\|_{L^{\infty}} 
    \, ds
    + C \int_0^t \frac{1}{(t-s)^{1/2}}
    \| F_A \|_{L^{\infty}}
    \| \tilde{\mu}_s^{N,h} - u_s \|_{L^1 \cap L^r} \, ds\\
    &\leq C \int_0^t \frac{1}{(t-s)^{1/2}}
    \| \tilde{\mu}_s^{N,h} - u_s \|_{L^1 \cap L^r} \, ds.
\end{align*}
Hence, we have
\begin{align*}
    \|\tilde{\mu}_t^{N,h} - u_t\|_{L^1 \cap L^r}&\leq
    \|g_2(t,\cdot) \ast (\tilde{\mu}_0^{N,h} - u_0)\|_{L^1 \cap L^r}
    + C \int_0^t \frac{1}{(t-s)^{1/2}} 
    \|\tilde{\mu}_s^{N,h} - u_s\|_{L^1 \cap L^r} \, ds \notag \\
    &\quad + \|E_t^{N,h}\|_{L^1 \cap L^r}
    + \sqrt{2}\|M_t^{N,h}\|_{L^1 \cap L^r} 
    + \|H_t^{N,h}\|_{L^1 \cap L^r}.
\end{align*}
Finally, using Grönwall's lemma for convolution 
integrals and taking the supremum over time, we establish
\begin{equation}\label{different_errors}
    \|\tilde{\mu}^{N,h} - u\|_{t,L^1 \cap L^r} \leq
    C \left( \|\tilde{\mu}_0^{N,h} - u_0\|_{L^1 \cap L^r} 
    + \|E^{N,h}\|_{t,L^1 \cap L^r}
    + \|M^{N,h}\|_{t,L^1 \cap L^r} 
    + \|H^{N,h}\|_{t,L^1 \cap L^r} \right).
\end{equation}

Now we will bound the moments of the different terms which appear in 
\eqref{different_errors}.\\

\textbf{Step 1}: Moments of $\|E^{N,h}\|_{t,L^1 
\cap L^r}$ 

Using the property \eqref{eq:heat_kernel} of the heat kernel, 
we obtain for any $t \in [0,T]$, for any $q \geq 1$, there exists $C > 0$ such that
\begin{equation*}
    \|E_t^{N,h}\|_{L^q} \leq C \int_0^t
    \frac{1}{(t-s)^{1/2}} 
   \left(\int_{\mathbb{R}^d} 
    | \langle \tilde{\mu}_s^{N,h}, V^N(x - \cdot)
    \left(F_A(K \ast \tilde{\mu}_s^{N,h}(x)) - 
    F_A(K \ast \tilde{\mu}_s^{N,h}(\cdot))\right) \rangle
    |^q dx\right) ^{\frac{1}{q}}ds.
\end{equation*}

Then we proceed exactly as in p.14 of \cite{ORT} 
\color{black}to get, 
from the Lipschitz continuity
of $F_A$, \hyperlink{assumption:AK}{$(A^{K}_{iii})$}
and the scaling property $|z|^\zeta V^N(z) \leq N^{-\alpha \zeta} V^N(z)$, the estimate,
taking $q \in \{1,r\}$,
\begin{equation*}
\begin{aligned}
    \left\|\left\|E_t^{N,h}\right\|_{L^1 \cap L^r}\right\|_{L^m(\Omega)}
    &\leq C \left\|
        \int_0^t
        \frac{\left\|\tilde{\mu}_s^{N,h}\right\|_{L^1 \cap L^r}}
             {(t-s)^{1/2}}
        \left(
            \int
            \left|
                \left\langle
                    \tilde{\mu}_s^{N,h},
                    V^N(x-\cdot)\lvert \cdot-x\rvert^\zeta
                \right\rangle
            \right|^q dx
        \right)^{\frac{1}{q}}
        ds
    \right\|_{L^m(\Omega)}
    \\
    &\leq \frac{C}{N^{\alpha\zeta}}
    \left\|
        \int_0^t
        \frac{
            \left\|\tilde{\mu}_s^{N,h}\right\|_{L^1 \cap L^r}
            \left\|\tilde{\mu}_s^{N,h}\right\|_{L^q}
        }{(t-s)^{1/2}}
        ds
    \right\|_{L^m(\Omega)}
    \\
    &\leq \frac{C}{N^{\alpha\zeta}}
    \left(
        \int_0^t
        \frac{
            \mathbb{E}\left[
                \left\|\tilde{\mu}_s^{N,h}\right\|_{L^1 \cap L^r}^{2m}
            \right]
        }{(t-s)^{1/2}}
        ds
    \right)^{\frac{1}{m}}
    \leq \frac{C}{N^{\alpha\zeta}}.
\end{aligned}
\end{equation*}
\color{black}
\textbf{Step 2}: Moments of $\|M^{N,h}\|_{t,L^1 
\cap L^r}$ 

The moments of this quantity follow from Lemma \ref{lemma} and
were obtained 
in \eqref{martingale-bounds} where it is shown that for any $ \varepsilon 
> 0$, for any  $ m \geq 1$, there exists $C > 0$, 

\[ \left\|\sup_{t \in [0,T]} \left\| 
M_t^{N,h} \right\|_{L^1 
\cap L^r} \right\|_{L^m(\Omega)}
 \leq 
C N^{-\frac{1}{2}(1-\alpha(d+\chi_r))+\varepsilon},\]
where we recall that $\chi_r = \max(0,d(1-\frac{2}{r}))$.\\

\textbf{Step 3}: Moments of $\|H^{N,h}\|_{t,L^1 
\cap L^r}$ 

Using the $\zeta$-Hölder property of $K$ given by $(A_{iii}^K)$ and the
Lipschitz property of $F_A$,
\begin{align*}
    &\|H_t^{N,h}\|_{L^1 
    \cap L^r} \\
    &\leq
    \frac{1}{N} \sum_{i=1}^N \int_0^t
    \frac{1}{(t-s)^{1/2}} \|V^N(\cdot - X_s^{i,N,h})
    [F_A(K \ast \tilde{\mu}_{\tau_s^h}^{N,h}(X_{\tau_s^h}^{i,N,h}))
    - F_A(K \ast \tilde{\mu}_s^{N,h}(X_s^{i,N,h}))]\|_{L^1 
    \cap L^r}ds 
    \notag\\
    &\leq \|V^N\|_{L^1 
    \cap L^r} \|F_A\|_{Lip}
    \int_0^t \frac{1}{(t-s)^{1/2}} \frac{1}{N} \sum_{i=1}^N
    |K \ast \tilde{\mu}_{\tau_s^h}^{N,h}(X_{\tau_s^h}^{i,N,h})
    - K \ast \tilde{\mu}_s^{N,h}(X_s^{i,N,h})| ds \notag \\
    & \leq C \|V^N\|_{L^1 
    \cap L^r}
    \int_0^t \frac{1}{(t-s)^{1/2}} \frac{1}{N^2} \sum_{i,j=1}^N
    |K \ast V^N(X_{\tau_s^h}^{i,N,h} - X_{\tau_s^h}^{j,N,h})
    - K \ast V^N(X_s^{i,N,h} - X_s^{j,N,h})| ds \notag \\
     & \leq C \|V^N\|_{L^1 \cap L^r}^2
     \int_0^t \frac{1}{(t-s)^{1/2}} \frac{1}{N^2} \sum_{i,j=1}^N
    | X_{\tau_s^h}^{i,N,h} - X_{\tau_s^h}^{j,N,h} - 
    X_s^{i,N,h} + X_s^{j,N,h} |^{\zeta} ds.
\end{align*}
Since 
\[|X_{s}^{i,N,h}-X_{\tau_s^h}^{i,N,h}| \leq 
|\int_{\tau_s^h}^{s} F_A(\tilde{\mu}_{\tau_r^h}^{N,h} \ast 
K(X_{\tau_r^h}^{i,N,h}))dr|
+ \sqrt{2} |W_{s}^{i}-W_{\tau_s^h}^{i}| \leq C h + 
 \sqrt{2} |W_{s}^{i}-W_{\tau_s^h}^{i}|,\]
and 
$\|V^N\|_{L^p} \leq C N^{\frac{d \alpha}{\bar{p}}}$, for any $p \geq 1$, we obtain 
\begin{align*}
    \|H^{N,h}\|_{T,L^1 
    \cap L^r} &\leq 
    C N^{\frac{2d \alpha}{\bar{r}}} \frac{1}{N^2} \sum_{i,j=1}^N
    \sup_{t \in [0,T]} 
    \int_0^t \frac{1}{(t-s)^{1/2}} 
    (h + \sqrt{2} |W^i_s - W^i_{\tau_s^h}| + 
    \sqrt{2} |W^j_s - W^j_{\tau_s^h}|)
    ^{\zeta} ds \\
    &\leq  C N^{\frac{2d \alpha}{\bar{r}}} \frac{1}{N^2} \sum_{i,j=1}^N
    \sup_{t \in [0,T]} (h + \sqrt{2} |W^i_t - W^i_{\tau_t^h}| +
    \sqrt{2} |W^j_t - W^j_{\tau_t^h}|)^{\zeta}.
\end{align*}
Hence, for any $m \geq 1 $, we have
\begin{align*}
    \left\|\|H^{N,h}\|_{t,L^1 \cap L^r} \right\|_{L^m(\Omega)}
    &\leq C N^{\frac{2d \alpha}{\bar{r}}} \frac{1}{N^2} \sum_{i,j=1}^N
    \left( \mathbb{E} \left[ 
        (h + \sup_{t \in [0,T]} \sqrt{2} |W^i_t - Wi_{\tau_t^h}| +
        \sqrt{2} |W^j_t - W^j_{\tau_t^h}|)^{\zeta m}
    \right] \right)^{\frac{1}{m}},
\end{align*}
then using the BDG's inequality for the
 martingales $(W^i_t - W^i_{\tau_t^h})_{t \in [0,T]}$ for any $i \in \{1,\ldots,N\}$, 
    we obtain for any  $
    m \geq 1/\zeta$,
\begin{equation*}
    \|\|H^{N,h}\|_{t,L^1 \cap L^r} \|_{L^m(\Omega)}
    \leq C N^{\frac{2d \alpha}{\bar{r}}} h^{\frac{\zeta}{2}}.
\end{equation*}

\textbf{Step 4}: Conclusion

Plugging the previous inequalities in Equation
\eqref{different_errors}, 
we conclude that for any $\varepsilon >0$ and $m \geq 1$, 
there exists $C>0$ such that for any $N \in \mathbb{N}^{\ast}$ and 
$h >0$, 

\begin{equation*}
   \left\| \|\tilde{\mu}^{N,h} - u\|_{t,L^1 \cap 
   L^r} \right\|_{L^m(\Omega)} \leq 
   C \left( \left\| \|\tilde{\mu}_0^{N,h} - u_0\|_{L^1 \cap L^r} \right\|_{L^m(\Omega)}
   + N^{-\rho + \epsilon}  + 
   N^{\frac{2d \alpha}{ \bar{r}}} h^{\frac{\zeta}{2}} \right),
\end{equation*}
where $\rho = \min(\alpha \zeta, \frac{1}{2}(1-\alpha(d+\chi_r)))$.


\subsection{Proof of Theorem 
\ref{thm.2}}\label{secthm2}
\color{black}
In the context of the Euler-Maruyama scheme, the
idea of expressing the density of a singular SDE using the 
Gaussian density is inspired by [21] (we
may refer the reader to \cite{KM17} in the case of SDEs with bounded and Hölder or piece-wise regular
coefficients). 
\color{black}
It has been proved in \cite{ORT} that,
under the previous assumptions,
the non-discretized version of the system 
\eqref{eds_euler} propagates chaos towards the McKean-Vlasov SDE 
\eqref{SDE}.

In particular, when $u_0 \in L^1 \cap L^r$,
 this McKean-Vlasov equation admits a 
unique weak solution up to any $T \leq T_{\max}$ (see \cite{ORT}).
In fact, the solution is also strong, as there is uniqueness in law 
and the linear equation has a bounded drift $K \ast u_t$ and 
as such admits an unique strong solution (see \cite{V81}).
Moreover, its density $u$ is the mild solution of 
Equation \eqref{PDE} in the sense of 
Definition \ref{def} and for $A$ such that $F_A(K \ast u) = K \ast u$, 
 $u$ satisfies the following equation
for all $ t \in [0,T]$ and almost all $y \in \mathbb{R}^d$, 

\begin{equation}\label{dev1}
     u_t(y) = 
    \int_{\mathbb{R}^d}
g_2(t,y-x) u_0(x)\,dx - \int_0^t  \mathbb{E}[F_A(u_{s} 
\ast K (X_{s})) \cdot \nabla 
g_2(t-s,y-X_s)] ds.
\end{equation}

Consider the particles 
$(X^{i,N,h})_{1 \leq i \leq N}$ defined as the strong solutions 
of \eqref{eds_euler}
driven by independent Brownian motions $(W^i)_{1 \leq i \leq N}$ and with 
initial conditions $(X_0^{i,N})_{1 \leq i \leq N}$ of law $u_0$. For each
$i \in \{1,\ldots,N\}$, define $X^i$ as the strong solution of \eqref{SDE}, 
driven by the Brownian motion $W^i$ and with initial
condition $X_0^{i,N}$.
Fix $i \in \{1,\ldots,N\}$, using Proposition \ref{prop1.3}, $X_t^{i,N,h}$ admits,
at time 
$t \in [0,T]$, a density with respect to the Lebesgue measure on $\mathbb{R}^d$ 
 denoted by $y 
\mapsto u_t^{i,N,h}(y)$ and for almost all $y \in \mathbb{R}^d$,
\begin{equation}\label{dev2}
     u_t^{i,N,h}(y)= \int_{\mathbb{R}^d}
    g_2(t,y-x) u_0(x)\,dx - \int_0^t \mathbb{E}[F_A(\tilde{\mu}^{N,h}_{\tau_s^h} 
    \ast K (X_{\tau_s^h}^{i,N,h})) \cdot \nabla 
    g_2(t-s,y-X_s^{i,N,h})]\,ds.
\end{equation}
By taking the difference of \eqref{dev1} and \eqref{dev2}
and introducing several terms to decompose the error, we obtain 
\begin{equation*}
     u_t(y)-u_t^{i,N,h}(y) = E_t^1 + E_t^2 + E_t^3,
\end{equation*}

where \\

$E_t^1 = \int_0^t \mathbb{E}
\left[(F_A(K \ast \tilde{\mu}^{N,h}_{\tau_s^h} (X_{\tau_s^h}^{i,N,h}))
- F_A(K \ast \tilde{\mu}_s^{N,h}(X_s^{i,N,h}))) \cdot 
\nabla g_2(t-s,y-X_s^{i,N,h})\right] ds,$\\

$E_t^2 = \int_0^t \mathbb{E}
\left[(F_A(K \ast \tilde{\mu}_s^{N,h}(X_s^{i,N,h}))
- F_A(K \ast u_s(X_s^{i,N,h}))) \cdot 
\nabla g_2(t-s,y-X_s^{i,N,h})\right] ds,$\\

$E_t^3  = \int_0^t \mathbb{E}
\left[F_A(K \ast u_s(X_s^{i,N,h})) \cdot 
\nabla_y g_2(t-s,y-X_s^{i,N,h})
- F_A(K \ast u_s(X_s^{i})) \cdot 
\nabla g_2(t-s,y-X_s^{i})\right] ds.$\\

The first term provides an 
error due to the discretization in the Euler scheme. 
The next term is handled using the quantitative convergence of 
$\tilde{\mu}^{N,h}$ towards $u$ in an appropriate norm. The last term will
allow us to conclude
using Grönwall's lemma. \\

\textbf{Step 1}: Bound for $E_t^1$

Using the Lipschitz property of $F_A$, the triangular inequality and 
$\zeta$-Hölder property of $K\ast f$, we have:
\begin{align*}
|E_t^1| &\leq \int_0^t \mathbb{E}
\left[ |F_A(K \ast \tilde{\mu}^{N,h}_{\tau_s^h} (X_{\tau_s^h}^{i,N,h}))
- F_A(K \ast \tilde{\mu}_s^{N,h}(X_s^{i,N,h}))| \, | 
\nabla_y g_2(t-s,y-X_s^{i,N,h})| \right] ds\\
&\leq  \int_0^t \mathbb{E}
\left[ |K \ast \tilde{\mu}^{N,h}_{\tau_s^h} (X_{\tau_s^h}^{i,N,h})
- K \ast \tilde{\mu}_s^{N,h}(X_s^{i,N,h})| \, | 
\nabla_y g_2(t-s,y-X_s^{i,N,h})| \right] ds \\
&\leq  \frac{1}{N} \sum_{k=1}^N \int_0^t \mathbb{E}
\left[ |K \ast V^N (X_{\tau_s^h}^{i,N,h}-X_{\tau_s^h}^{k,N,h})
- K \ast V^{N}(X_s^{i,N,h}-X_{s}^{k,N,h})| | 
\nabla_y g_2(t-s,y-X_s^{i,N,h})| \right] ds \\
&\leq \|V^N\|_{L^1 \cap L^r}\frac{1}{N} \sum_{k=1}^N \int_0^t \mathbb{E}
\left[ |X_{\tau_s^h}^{i,N,h}-X_{\tau_s^h}^{k,N,h}
- X_s^{i,N,h}+X_{\tau_s^h}^{k,N,h}|^{\zeta} | 
\nabla_y g_2(t-s,y-X_s^{i,N,h})| \right] ds.
\end{align*}
Using $\|V^N \|_{L^r} = N^{\alpha d/ \bar{r}}\|V \|_{L^r}$ and 
the property \eqref{eq:gaussian_density} of the Gaussian density,
 we have that
 for any $c>2$, there exists $C >0$, 
\begin{align*}
    |E_t^1| &\leq  C N^{\frac{d \alpha}{\bar{r}}} \frac{1}{N} \sum_{k=1}^N
    \int_0^t \frac{1}{\sqrt{t-s}}
     \mathbb{E}[(h+\sqrt{2}|W_{s}^{i}-W_{\tau_s^h}^{i}| + 
     \sqrt{2}  |W_{s}^{k}-W_{\tau_s^h}^{k}|)^{\zeta}
    g_c(t-s,y-X_s^{i,N,h})] ds.
\end{align*}
Hence, using Hölder's inequality in space with $\bar{p} > d$,
 the previous inequality gives 
\begin{align*}
    &|E_t^1|\\
     &\leq C N^{\frac{d \alpha}{\bar{r}}} \frac{1}{N} \sum_{k=1}^N
    \int_0^t \frac{1}{\sqrt{t-s}}
     \mathbb{E}[(h+\sqrt{2}|W_{s}^{i}-W_{\tau_s^h}^{i}| + 
     \sqrt{2}  |W_{s}^{k}-W_{\tau_s^h}^{k}|)^{\zeta
     \bar{p}}]^{1/\bar{p}} 
     \mathbb{E} \left[ g_c(t-s,y-X_s^{i,N,h})^p \right]^{1/p} ds.
\end{align*}
Choosing $c > p$ and applying Proposition \ref{prop1.3}
with $c/p$ we obtain
\begin{align*}
    \mathbb{E} \left[ g_c(t-s,y-X_s^{i,N,h})^p \right]^{1/p}
    &= \left( \int_{\mathbb{R}^d} g_c(t-s,y-z)^p 
    u_s^{i,N,h}(z)\, dz \right)^{1/p} \\
    &\leq C 
    \left( \int_{\mathbb{R}^d}
    \int_{\mathbb{R}^d} 
     g_c(t-s,y-z)^p g_{c/p}(s,z-x)u_0(x)\,dx \, dz \right)^{1/p} \\
    &\leq C \frac{1}{(t-s)^{\frac{d}{2\bar{p}}}}
    \left(\int_{\mathbb{R}^d} 
     g_{c/p}(t,y-x)u_0(x)\,dx\right)^{1/p}.
\end{align*}
Similarly to the proof 
of bounds for 
moments of $\|H_t^{N,h}\|_{t,L^1 \cap L^r}$
in Theorem \ref{thm.1}, we use 
the BDG's inequality for the
 martingales $(W^i_t - W^i_{\tau_t^h})_{t \in [0,T]}$ for any $i \in \{1,\ldots,N\}$, 
and obtain 
\begin{align*}
    |E_t^1|
     &\leq C N^{\frac{d \alpha}{\bar{r}}} h^{\frac{\zeta}{2}}
     \left(\int_{\mathbb{R}^d}
     g_{c/p}(t,y-x)u_0(x)\,dx\right)^{1/p} \int_0^t \frac{1}{(t-s)^{1/2+d/2\bar{p}}}\,ds \\
     & \leq C N^{\frac{d \alpha}{\bar{r}}} h^{\frac{\zeta}{2}}
     \left(\int_{\mathbb{R}^d} 
     g_{c/p}(t,y-x)u_0(x)\,dx\right)^{1/p}.
\end{align*}

\textbf{Step 2}: Bound for $E_t^2$

Using the fact that $F_A$ is Lipschitz, Hölder's inequality
 with $\bar{p} > d$, Theorem \ref{thm.1} and Proposition
 \ref{prop1.5}, we obtain for all $c > 2$,
\begin{align*}
    |E_t^2| &= \left|\int_0^t \mathbb{E}
    \left[(F_A(K\ast \tilde{\mu}_s^{N,h}(X_s^{i,N,h}))
    -F_A(K\ast u_s(X_s^{i,N,h})))\cdot 
    \nabla_y g_2(t-s,y-X_s^{i,N,h})\right]\,ds\right|\\
    & \leq C \int_0^t \frac{1}{(t-s)^{1/2}} \mathbb{E}
    \left[\|\tilde{\mu}^{N,h}-u\|_{T, L^1 \cap L^r }
     g_c(t-s,y-X_s^{i,N,h})\right]\,ds\\
     & \leq C \left(\left\| \left\| \tilde{\mu}_0^{N,h} - u_0
     \right\|_{L^1\cap L^r} 
     \right\|_{L^{\bar{p}}(\Omega)}+N^{-\rho+\epsilon} + 
     N^{\frac{d \alpha}{\bar{r}}} h^{\frac{\zeta}{2}}\right)
     \int_0^t \frac{1}{(t-s)^{1/2}} \mathbb{E}[g_c(t-s,y-X_s^{i,N,h})^p]^{1/p}\,ds\\
     & \leq C \left(\left\| \left\| \tilde{\mu}_0^{N,h} - u_0
     \right\|_{L^1\cap L^r} 
     \right\|_{L^{\bar{p}}(\Omega)}+N^{-\rho+\epsilon} + 
     N^{\frac{d \alpha}{\bar{r}}} h^{\frac{\zeta}{2}}\right) 
     \left(\int_{\mathbb{R}^d} g_{c/p}(t,y-x)u_0(x)\,dx\right)^{1/p}.
\end{align*}

\textbf{Step 3}: Bound for $E_t^3$ and Grönwall's Lemma

Using that $F_A$ is bounded and \eqref{eq:gaussian_density}, we have
for all $c > 2$,
\begin{align*}
    |E_t^3|  &=\left|\int_0^t \mathbb{E}
    \left[F_A(K\ast u_s(X_s^{i,N,h}))\cdot 
    \nabla_yg_2(t-s,y-X_s^{i,N,h})
    -F_A(K\ast u_s(X_s^{i}))\cdot 
    \nabla_yg_2(t-s,y-X_s^{i})\right]\,ds\right|\\
    &= \left|\int_0^t \int_{\mathbb{R}^d}
    \left[F_A(K\ast u_s(z))\cdot 
    \nabla_yg_2(t-s,y-z)u_s^{i,N,h}(z)
    -F_A(K\ast u_s(z))\cdot 
    \nabla_yg_2(t-s,y-z)u_s(z)\right]\,dz\,ds\right|\ \\
    & \leq C
     \int_0^t \int_{\mathbb{R}^d}
    \frac{1}{(t-s)^{1/2}}
    g_c(t-s,y-z) 
    |u_s^{i,N,h}(z)-u_s(z)|\,dz\,ds. 
\end{align*}
Let $$f(s) : = \sup_{z \in \mathbb{R}^d} \frac{
    |u_s^{i,N,h}(z)-u_s(z)|}{(\int_{\mathbb{R}^d}
    g_{c/p}(t,z-x) u_0(x)\,dx)^{1/p}}.$$
Then, combining all the previous inequalities, we obtain
\[ f(t) \leq C (N^{-\rho+\epsilon} + N^{\frac{d
 \alpha}{\bar{r}}}h^{\frac{1}{2}})
+ C \int_0^t  \frac{1}{(t-s)^{1/2+d/2\bar{p}}} f(s) \, ds. \]
Thus, using Grönwall's lemma in the 
convolution form, for almost every
 $z \in \mathbb{R}^d$, for any $t \in [0,T]$:
$$ |u_t^{i,N,h}(z) - u_t(z)|
\leq C \left(\left\| \left\| \tilde{\mu}_0^{N,h} - u_0
\right\|_{L^1\cap L^r} 
\right\|_{L^{\bar{p}}(\Omega)}+N^{-\rho+\epsilon} +
N^{\frac{d \alpha}{\bar{r}}}h^{\frac{\zeta}{2}}\right)
 \left(\int_{\mathbb{R}^d}  g_{c/p}(t,x-z) u_0(x)\,dx\right)^{1/p}.$$


\section{Examples}\label{sec4} 

In this section, we provide explicit convergence 
rates, \color{black}applying Theorems \ref{thm.1} and \ref{thm.2},
for various theoretically and practically relevant interaction kernels 
satisfying \color{black}
Assumption \hyperlink{assumption:AK}{\boldmath{$(A_K)$}}.
Recall that, for regular enough initial data (see Remark \ref{initial-data}),
the convergence rate \color{black}in Theorems \ref{thm.1} and \ref{thm.2}
\color{black}is of order
\[
O(N^{-v_1} + N^{v_2}h^{v_3}),
\]
where \(v_1, v_2, v_3 > 0\) are defined as follows:
\[
v_1 = \min\left(\alpha \zeta, \frac{1}{2}(1 - \alpha(d + \chi_r))\right) + \varepsilon, \quad
v_2 = \frac{d \alpha}{\bar{r}}, \quad
v_3 = \frac{\zeta}{2},
\]
with \(\chi_r = \max(0, d(1 - 2/r))\) and where parameters \(p, q, r, \zeta, \alpha\) are specified in 
Assumptions \hyperlink{assumption:AK}{\boldmath{$(A_K)$}} and 
\hyperlink{assumption:Aalpha}{\boldmath{$(A_\alpha)$}}.

When considering convergence rates of this form, one can notice that 
best rate is given when one maximizes \(v_1\) and 
\(v_3\), while minimizing \(v_2\), as functions of the parameters \(p, q, r, \zeta, \alpha\). However, 
due to the interdependence of $v_1, v_2, v_3$, achieving these objectives simultaneously is generally 
impossible without decoupling them. 

Assuming there are no computational limitations, we can 
select \(h\) arbitrarily small. In this case, the optimal rate is 
achieved when the two terms \(N^{-v_1}\) and \(N^{v_2}h^{v_3}\) are of the same order, 
which corresponds to choosing \(h = N^{-\frac{(v_1+v_2)}{v_3}}.\)
Substituting this into the convergence rate, the optimal rate simplifies to
 \[O(N^{-v_1}),\] 
where \(v_1\) is maximized as a function of the parameters \(p, q, r, \zeta, \alpha\). 

In practical scenarios, computational complexity plays a critical role.
 An additional consideration for the optimal choice of parameters 
 is to fix the convergence error $N^{-v_1}$ at a given order
 \(\varepsilon > 0\) and determine the parameters that minimize the computational cost of the scheme.
 Considering the physical dimensional $d \in \{1,2,3\}$,
 the total number of operations 
 is of order \( O\left(\frac{N^2}{h}\right) \). 
 This estimate arises from the fact that computing the 
 position of a single particle requires \( O(N) \) 
 operations, there are \( N \) particles whose positions
  must be updated at each step, and the total number of 
  steps is \( O\left(\frac{1}{h}\right) \). Considering that
terms \(N^{-v_1}\) and \(N^{v_2}h^{v_3}\) are of the same order, we obtain the computational cost
we seek to minimize 
\[
O(\varepsilon^{-\frac{2}{v_1} - \frac{1}{v_3} - \frac{v_2}{v_1 v_3}})
\]
as a function of \(p, q, r, \zeta, \alpha\), under the constraints imposed by Assumptions 
\hyperlink{assumption:AK}{\boldmath{$(A_K)$}} and \hyperlink{assumption:Aalpha}{\boldmath{$(A_\alpha)$}}.

These two perspectives highlight the interplay between the 
convergence rate and computational complexity, guiding the choice of parameters in the 
following examples for optimal performance.

\subsection{Regular kernels} For bounded and
Lipschitz continuous 
kernels, the parameters 
of Assumptions \hyperlink{assumption:AK}{\boldmath{$(A_K)$}} and 
\hyperlink{assumption:Aalpha}{\boldmath{$(A_\alpha)$}} are given by:  
\[
p \in (1, \infty], \quad q = \infty, \quad \zeta = 1, \quad 
\alpha \in \left(0, \frac{1}{d}\right).
\]
For the Heaviside kernel 
$K(\cdot)x := \mathbf{1}_{\{\cdot \geq 0\}}$, we derive the following 
parameters of
Assumptions \hyperlink{assumption:AK}{\boldmath{$(A_K)$}} and 
\hyperlink{assumption:Aalpha}{\boldmath{$(A_\alpha)$}}:
\[
p \in [1, \infty], \quad q = \infty, 
\quad \zeta = 1^-, \quad \alpha \in \left(0, \frac{1}{d} \right).
\]
For this type of kernel, we have \(v_3 = \frac{1}{2}\), and the maximum of \(v_1\) and the 
minimum of \(v_2\) are attained for the same parameters:  
\[
r = 1, \quad \alpha = \frac{1}{d+2}.
\]
In this case, the convergence rate is of order:  
\[
N^{-\frac{1}{d+2}^-} + h^{\frac{1}{2}}.
\]
\color{black}
Let us compare this rate with the one obtained in \cite{BT95} for
one-dimensional nonlinear PDEs with bounded Lipschitz drift, and briefly
discuss the rates established in \cite{BT95,BT94} for one-dimensional PDEs
of Burgers type. For the latter, direct comparisons are more delicate, but
are nevertheless worth highlighting. For the former, neglecting the error
at \(t=0\), Theorem~2.5 gives, for \(d=1\), the rate
\(
    \mathcal{O}\bigl(N^{-1/3}+h^{1/2}\bigr),
\)
under slightly weaker working assumptions than those of Theorem~2.2
in \cite{BT95}, where the error is
\(
    \mathcal{O}\bigl(N^{-1/3}+h^{1/3}\bigr)
\)
(obtained by minimizing therein the fixed
smoothness parameter \(\varepsilon\) into
\(
    \varepsilon
    \approx
    \bigl(N^{-1/2}+h^{1/2}\bigr)^{1/3}
\)).
For the one-dimensional Burgers equation, instead of a direct
interpretation of the PDE (which would involve the kernel
\(K=\delta_{\{0\}}\), see e.g. \cite{ORT}), the authors introduce
a clever alternative in which the solution of the PDE is defined by the
cumulative distribution function
\(
    \int_{-\infty}^{x} u_t(y)\,dy,
\)
with equation~(1.2) subject to the kernel
\(
    K(x)=\mathbf{1}_{\{x\geq 0\}}
\)
(see Section~1 of \cite{BT95} for further details on this interpretation).
The corresponding stochastic particle scheme can then be defined without
mollification, and the associated error can be measured between
\(
    \int_{-\infty}^{x} u_t(y)\,dy
    \quad\text{and its empirical counterpart}\quad
    \frac{1}{N}\sum_{i=1}^{N}
    \mathbf{1}_{\{X_t^{i,N,h}\leq x\}}.
\)
In \cite{BT95,BT94}, this error is defined as the expected
\(L^1\)-distance between the cumulative distribution functions, equivalent to
\(
    \mathbb{E}\!\left[
        \mathbb{W}_1\bigl(\widetilde{\mu}_t^{N,h},u_t\bigr)
    \right],
\)
where \(\mathbb{W}_1\) denotes the
Kantorovich--Rubinstein-Wasserstein distance and is shown to be of order
\(
    \mathcal{O}\bigl(N^{-1/2}+h^{1/2}\bigr)
\)
(see Theorem~3.1 in \cite{BT94} and Theorem~2.1 in \cite{BT95}).
In comparison, observe that by the
 Kantorovich duality, for every probability 
measure
\(\mu\) having a finite first moment, verifies
\[
\begin{aligned}
    \mathbb{W}_1\bigl(\mu,\mu*V^N\bigr)
    &=
    \sup_{\|f\|_{\mathrm{Lip}}\leq 1}
    \int
    \bigl((f*V^N)(x)-f(x)\bigr)\,\mu(dx) \\
    &\leq
    N^{-\alpha}
    \int (1+|x|)\,\mu(dx).
\end{aligned}
\]
Hence, applying Theorem~2.5 again with \(d=1\) and \(\alpha=1/3\), it
follows that
\[
\begin{aligned}
    \mathbb{E}\!\left[
        \mathbb{W}_1\bigl(\mu_t^{N,h},u_t\bigr)
    \right]
    &\leq
    \mathbb{E}\!\left[
        \mathbb{W}_1
        \bigl(\mu_t^{N,h},\widetilde{\mu}_t^{N,h}\bigr)
    \right]
    +
    \mathbb{E}\!\left[
        \mathbb{W}_1
        \bigl(\widetilde{\mu}_t^{N,h},u_t\bigr)
    \right] \\
    &=
    \mathcal{O}\bigl(N^{-1/3}\bigr)
    +
    \mathcal{O}\bigl(N^{-1/3^-}+h^{1/2}\bigr).
\end{aligned}
\]
Compared with \cite{BT95,BT94}, this estimate preserves the rate with
respect to the time step \(h\), but the rate with respect to the number of
particles \(N\) is weaker, passing from \(N^{-1/2}\) to \(N^{-1/3^-}\).
This deterioration is precisely due to the regularization sequence \(V^N\),
which imposes the scaling \(N^{-\alpha}\). This scale is inherent in our
particle method and to our error analysis. In \cite{BT95,BT94}, the authors
exploit the absence of mollification and the regularizing effect of the
Heaviside kernel to obtain a more straightforward error analysis, avoiding
the cost of mollification and the control of fluctuations (appearing in
Step~3 of the proof of Theorem~2.5). Our general method directly addresses
these more intricate components. Although our rate does not directly cover
the particular setting of \cite{BT95,BT94}, the method remains robust for
more singular kernels (see below).
\color{black}
\subsection{Riesz potentials with $\mathbf{d \geq 2}$ and 
$ 
\mathbf{s \in (0,d-2)}$} The general definition of Riesz 
potentials in any dimension is given, up to some $d$-dependent factor,
by 

\begin{equation}\label{eq:Riesz}
    V_s(x) =
    \begin{cases} 
        |x|^{-s} & \text{if } s \in (0,d) \\
        -\log |x|  & \text{if } s = 0
    \end{cases}, \,
    x \in \mathbb{{R}}^d,
\end{equation}
\color{black}
see for example \cite{S24}. \color{black}
Let $$K_s := \pm \nabla V_s,$$ be the corresponding kernel. 
Then, for $d \geq 2$ and 
$s \in (0, d - 2)$, the kernel $K_s$ satisfies  Assumptions 
\hyperlink{assumption:AK}{\boldmath{$(A_K)$}}, see Section 
5 in \cite{ORT}.

Precisely, for $s \in (0,d-2)$ and $d \geq 2$, we have 
$\nabla V_s = -s  \frac{x}{|x|^{s+2}}$. In this case, Assumption 
\hyperlink{assumption:AK}{\boldmath{$(A_K)$}} is satisfied for $p < \frac{d}{s+1}$ and $q > \frac{d}{s+1}$. 
The best convergence rate when considering that 
$N^{-v_1}$ and $N^{v_2}h^{v_3}$ are of the same order is
$$N^{-\frac{1}{2(d+1)}^-} + N^{\frac{d}{2(d+1)}^+} h^{\frac{1}{2}^-},$$ where 
we have set $r=z=+\infty$, $\zeta = 1^-$, $\alpha = \frac{1}{2(d+1)}^+ $.

If on the contrary we want to optimize the computational cost,
we obtain for an error of order $\varepsilon$ the computational cost of order
$$O(\varepsilon^{-(6d+5)^+}),$$
attained for the same parameters as above : $r=z=+\infty$, $\zeta = 1^-$, $\alpha = \frac{1}{2(d+1)}^+ $.
Hence the two perspectives are equivalent in this case.

\subsection{Coulomb type kernels with $\mathbf{d \geq 2}$
 and $\mathbf{s = d-2}$}
An important class of kernels that enter in the above framework are
Coulomb type kernels. They include for 
example kernels derived from the Riesz potential 
\eqref{eq:Riesz} for all $d \geq 2$ and $s = d-2$ : 
$$K = \pm \nabla V_{d-2},$$
 but also the attractive Keller-Segel kernel in $d = 2$ defined by 
 $$K_{KS}(x) = - \chi \frac{x}{|x|^d},$$
 where $\chi$ is a fixed parameter.
For such kernels, we obtain the same convergence rates as above.
In $d = 2$, this gives a rate of order 
$$ N^{-\frac{1}{6}^-} + N^{\frac{1}{3}^+} h^{\frac{1}{2}^-},$$
attained for $r=z=+\infty$, $\zeta = 1^-$, $\alpha = \frac{1}{6}^+ $.
The computational cost for a fixed error $\varepsilon$ is of order
$$O(\varepsilon^{-11^+}).$$

\subsection{Krylov-Röckner type kernels}
In Remark \ref{remlplq}, we considered
$L^p(\mathbb{R}^d)$ kernels which are of the form: 
\[
   K(x) = \frac{x}{|x|^{\gamma}} \chi(x),
\] 
where $\gamma \in (1, 2)$ and $\chi$ is a smooth 
function equal to $1$ on $B_1$ and $0$ outside $B_2$. In this case, 
$K$ verifies the
Krylov-Röckner condition and also 
verifies Assumption \hyperlink{assumption:AK}{\boldmath{$(A_K)$}}.
For such kernel, 
we have 
$p \in (1,\frac{d}{\gamma -1})$, $q = + \infty$,
and we can verify 
that the optimal rate is of order $$N^{-\frac{1}{2(d+1)}^-} + N^{\frac{d}{2(d+1)}^+} h^{\frac{1}{2}^-},$$ where 
we have set $r=+\infty$, $\zeta = 1^-$, $\alpha = \frac{1}{2(d+1)}^+ $.
 And the computational cost for a fixed error $\varepsilon$ is of order
$$O(\varepsilon^{-(6d+5)^+}),$$
attained for the same parameters.\\

\color{black}
For general $K \in L^p$ kernels
satisfying the condition $p > d$, the
Hölder regularity assumption \hyperlink{assumption:AK}{$(A^{K}_{iii})$}
 may not hold.
Yet, our proof strategy of Theorems 2.5 and 2.6
 can still be applied up to some additional
condition on $\mu_0^{N,h}$, $\alpha$ and a weaker convergence rate.
\color{black}
In this case, we can work with the $L^{\bar p}$ norm 
instead of the $L^1 \cap L^r$ norm. Notice that 
Proposition 1.2 in \cite{ORT}, which 
ensures existence and uniqueness of a mild solution 
to the PDE \eqref{PDE}, holds for $L^p$ kernels with $p > d$ 
and for $u_0 \in L^1 \cap L^r$ with $r>\bar{p}$.

Inspecting our previous proofs, we 
notice that \hyperlink{assumption:AK}{$(A^{K}_{iii})$}
is only used to control
the moments of $\|E^{N,h}\|_{t,L^1 \cap L^r}$ and 
the moments of $\|H^{N,h}\|_{t,L^1 \cap L^r}$ in the 
proof of Theorem~\ref{thm.1}, and to control the term $E_t^1$ in the
proof of Theorem~\ref{thm.2} \color{black}
(see respectively, Steps 1 and 3, and Step 1).
\color{black}
Without the Hölder regularity 
of $K * f$, these terms must 
be estimated by exploiting more carefully the 
regularity of $\tilde{\mu}^{N,h}$.

\color{black}
For the proof of Theorem 2.5, let us
re-examine each of the aforementioned controls. \color{black}For the term
\(\|E^{N,h}\|_{t,L^{\bar{p}}}\), we have \begin{align*}  
 \|E_t^{N,h}\|_{L^{\bar{p}}} &\leq
  \int_0^t \|\nabla g_2(t-s,\cdot)\|_{L^{\bar{p}}}
    \left\|\int_{\R^d} d\mu_s^{N,h}(y) V^N(\cdot - y) 
    \left(F_A(K \ast \tilde{\mu}_s^{N,h}(\cdot))
    - F_A(K \ast \tilde{\mu}_s^{N,h}(y))\right)\right\|_{L^1} ds\\
    & \leq C \int_0^t ds \, (t-s)^{-\frac{1}{2}-\frac{d}{2p}}
    \|K \ast \tilde{\mu}_s^{N,h}\|_{W^1_{\infty}} \int_{\R^d} 
    \int_{\R^d}
     V^N(x - y) |x-y|\, d\mu_s^{N,h}(y)\, dx\\
    & \leq C \|\tilde{\mu}_s^{N,h}\|_{t,W^1_{\bar{p}}} 
    N^{-\alpha} \int_0^t (t-s)^{-\frac{1}{2}-\frac{d}{2p}}\, ds,
\end{align*}
where $\|\cdot\|_{t,W^1_{\bar{p}}}: = 
\sup_{s \in [0,t]} \|\cdot\|_{W^1_{\bar{p}}}$, for $W^1_{\bar{p}}$ 
the Sobolev space. Hence, we obtain for all 
$m \geq 1$,
\[
\left\| \|E_t^{N,h}\|_{L^{\bar{p}}} \right\|_{L^m(\Omega)} \leq
  C \left\|\|\tilde{\mu}_s^{N,h}\|_{t,W^1_{\bar{p}}} \right\|_{L^m(\Omega)}
    N^{-\alpha}.
\]
\color{black}Recall that $W_p^1 = H^1_p$ (e.g. \cite{T83}, p.38) where, 
for $\beta  \in \mathbb{N}^*$ and $q\in (1,\infty)$, $H^\beta_q=
\{f \in \mathcal{C}_c^{\infty} : (I + \Delta)^{\beta/2} f \in L^q\}$
is the Bessel potential space. \color{black} 
By revisiting the proof of Proposition A.12 in \cite{ORT}, 
\color{black}
we can verify that for $m\geq1$,
 under the assumption $\left\|\|\tilde{\mu}_0^{N,h}\|_{
 H^r_q}
 \right\|_{L^m(\Omega)} < \infty$, and imposing the restrictive
  version of $(A_\alpha)$ :
  $$ 0 < \alpha < \frac{1}{d+2+2d(\frac{1}{2}-
  \frac{1}{\bar{p}}) },$$
we can show that
$\left\|\sup_{s \in [0,T]} \|\tilde{\mu}_s^{N,h}\|_{H^r_q}
 \right\|_{L^m(\Omega)}$
 is uniformly bounded in $N$ and $h$. 
 \color{black}
 Hence, we obtain a convergence rate of order 
  $N^{-\alpha}$ for the term $E_t^{N,h}$.
  Repeating the estimates for $H_t^{N,h}$ gives
  \begin{align*}
    &\|H_t^{N,h}\|_{L^{\bar{p}}} \\
    &\leq
    \frac{1}{N} \sum_{i=1}^N \int_0^t \| \nabla g_2(t-s,\cdot)\|_{L^{\bar{p}}}
    \|V^N(\cdot - X_s^{i,N,h})
    [F_A(K \ast \tilde{\mu}_{\tau_s^h}^{N,h}(X_{\tau_s^h}^{i,N,h}))
    - F_A(K \ast \tilde{\mu}_s^{N,h}(X_s^{i,N,h}))]\|_{L^1}ds 
    \notag\\
    &\leq C \|V^N\|_{L^1} \|F_A\|_{Lip}
    \int_0^t \frac{1}{(t-s)^{\frac{1}{2}+\frac{d}{2\bar{p}}}} 
    \frac{1}{N} \sum_{i=1}^N
    |K \ast \tilde{\mu}_{\tau_s^h}^{N,h}(X_{\tau_s^h}^{i,N,h})
    - K \ast \tilde{\mu}_s^{N,h}(X_s^{i,N,h})| ds \notag \\
    & \leq C 
    \int_0^t \frac{1}{(t-s)^{\frac{1}{2}+\frac{d}{2\bar{p}}}} \frac{1}{N^2} \sum_{i,j=1}^N
    |K \ast V^N(X_{\tau_s^h}^{i,N,h} - X_{\tau_s^h}^{j,N,h})
    - K \ast V^N(X_s^{i,N,h} - X_s^{j,N,h})| ds \notag \\
     & \leq C \|\nabla V^N\|_{L^{\bar{p}}}
     \int_0^t \frac{1}{(t-s)^{1/2}} \frac{1}{N^2} \sum_{i,j=1}^N
    | X_{\tau_s^h}^{i,N,h} - X_{\tau_s^h}^{j,N,h} - 
    X_s^{i,N,h} + X_s^{j,N,h}| ds.
\end{align*}
Then following the same computations as in Step 3 of the proof 
of Theorem \ref{thm.1}, we obtain for any $m \geq 1$,
\[\left\| \|H_t^{N,h}\|_{L^{\bar{p}}} 
\right\|_{L^m(\Omega)} \leq
  C N^{\alpha(\frac{d}{p}+1)}h^{\frac{1}{2}}. \]
Injecting the previous bounds in the proof of Theorem \ref{thm.1}, 
we conclude that for any $\varepsilon >0$ and $m \geq 1$, 
there exists $C>0$ such that for any $N \in \mathbb{N}^{\ast}$ and 
$h >0$, 

\begin{equation}\label{lp-cv-rate}
   \left\| \|\tilde{\mu}^{N,h} - u\|_{t,L^{\bar{p}}
   } \right\|_{L^m(\Omega)} \leq 
   C \left( \left\| \|\tilde{\mu}_0^{N,h} - u_0\|_{L^{\bar{p}}} \right\|_{L^m(\Omega)}
   + N^{-\rho + \epsilon}  + 
 N^{\alpha(\frac{d}{p}+1)}h^{\frac{1}{2}} \right),
\end{equation}
where $\rho = \min(\alpha, \frac{1}{2}(1-\alpha(d+\chi_{r})))$ and 
$r\geq\bar{p}$.

Finally, for the term $E_t^1$, revisiting the proof of 
Step 1 in Theorem \ref{thm.2} gives
\begin{align*}
|E_t^1| &\leq \int_0^t \mathbb{E}
\left[ |F_A(K \ast \tilde{\mu}^{N,h}_{\tau_s^h} (X_{\tau_s^h}^{i,N,h}))
- F_A(K \ast \tilde{\mu}_s^{N,h}(X_s^{i,N,h}))| \, | 
\nabla_y g_2(t-s,y-X_s^{i,N,h})| \right] ds\\
&\leq  \int_0^t \mathbb{E}
\left[ |K \ast \tilde{\mu}^{N,h}_{\tau_s^h} (X_{\tau_s^h}^{i,N,h})
- K \ast \tilde{\mu}_s^{N,h}(X_s^{i,N,h})| \, | 
\nabla_y g_2(t-s,y-X_s^{i,N,h})| \right] ds \\
&\leq  \frac{1}{N} \sum_{k=1}^N \int_0^t \mathbb{E}
\left[ |K \ast V^N (X_{\tau_s^h}^{i,N,h}-X_{\tau_s^h}^{k,N,h})
- K \ast V^{N}(X_s^{i,N,h}-X_{s}^{k,N,h})| | 
\nabla_y g_2(t-s,y-X_s^{i,N,h})| \right] ds \\
&\leq \|\nabla V^N\|_{L^{\bar{p}}}\frac{1}{N} \sum_{k=1}^N \int_0^t \mathbb{E}
\left[ |X_{\tau_s^h}^{i,N,h}-X_{\tau_s^h}^{k,N,h}
- X_s^{i,N,h}+X_{\tau_s^h}^{k,N,h}|^{\zeta} | 
\nabla_y g_2(t-s,y-X_s^{i,N,h})| \right] ds.
\end{align*}
then, replicating the rest of the proof and knowing that 
$\|\nabla V^N\|_{L^{\bar{p}}} \leq C N^{\alpha(\frac{d}{p}+1)}$,
we obtain for almost any $z \in \mathbb{R}^d$, for any $q 
\in (1,\frac{d}{d-1})$,
 for any
 $t \in [0,T]$:
$$ |u_t^{i,N,h}(z) - u_t(z)|
\leq C \left(\left\| \left\| \tilde{\mu}_0^{N,h} - u_0
\right\|_{L^{\bar{p}}} 
\right\|_{L^{\bar{q}}(\Omega)}+ N^{-\rho + \epsilon}  + 
 N^{\alpha(\frac{d}{p}+1)}h^{\frac{1}{2}}\right)
 \left(\int_{\mathbb{R}^d}  g_{c/q}(t,x-z)
  u_0(x)\,dx\right)^{1/q},$$
where $\rho = \min(\alpha, 
\frac{1}{2}(1-\alpha(d+\chi_{r})))$ and $r\geq\bar{p}$.\\

Finally, for these $L^p$ kernels with $p>d$
 we obtain an optimal rate 
of convergence of order 
$$ N^{-\frac{1}{2+d}^-} + 
N^{\frac{1}{2+d}(\frac{d}{p}+1)} h^{\frac{1}{2}}$$
for $r = + \infty$ and $\alpha = 
\frac{1}{2+d}$.\\

\textbf{Acknowledgements.} 
This work was partially
supported by the French National
 Research Agency through the  
 SDAIM project (ANR-22-CE40-0015).
The author would like to 
thank an anonymous referee for detailed 
comments and suggestions, which have helped to
 improve the quality of this paper and 
 provided valuable insights. 


\addcontentsline{toc}{section}{References} \bibliographystyle{plainnat}
\bibliography{manuscript}

\end{document}